\def \u {\mathop{\rm \mathcal{U}}\nolimits}
\def \tr {\mathop{\rm tr}\nolimits}
\def \re {\mathop{\rm Re}\nolimits}
\def \im {\mathop{\rm Im}\nolimits}
\def \etr {\mathop{\rm etr}\nolimits}
\def \diag {\mathop{\rm diag}\nolimits}
\newcommand {\fin}{\hfill \, $\Box$ \,\\[2ex]}
\renewenvironment{abstract}
                 {\vspace{6pt}
                  \begin{center}
                  \begin{minipage}{5in}
                  \centerline{\textbf{Abstract}}
                  \noindent\ignorespaces
                 }
                 {\end{minipage}\end{center}}
\newtheorem{thm}{\textbf{Theorem}}[section]
\newtheorem{lem}{\textbf{Lemma}}[section]
\theoremstyle{definition}
\newtheorem{defn}{\textbf{Definition}}[section]
\title{\Large \textbf{Distributions on symmetric cones II: Beta-Riesz distribution}}
\author{
  \textbf{Jos\'e A. D\'{\i}az-Garc\'{\i}a} \thanks{Corresponding author\newline
   {\bf Key words.}  Wishart distribution; Beta-Riesz distribution; Riesz distribution,
     generalised beta function and distribution, real, complex quaternion and octonion random matrices.\newline
    2000 Mathematical Subject Classification. Primary 60E05, 62E15; secondary
    15A52}\\
  {\normalsize Department of Statistics and Computation} \\
  {\normalsize 25350 Buenavista, Saltillo, Coahuila, Mexico} \\
  {\normalsize E-mail: jadiaz@uaaan.mx} \\
}
\date{}
\begin{document}
\maketitle

\begin{abstract}
This article derives several properties of the Riesz distributions, such as their corresponding Bartlett
decompositions, the inverse Riesz distributions, the distribution of the generalised variance and the
distribution of their eigenvalues for real normed division algebras. In addition, introduce a kind of
generalised beta distribution termed beta-Riesz distribution for real normed division algebras. Two
versions of this distributions are proposed and some properties are studied.
\end{abstract}

\section{Introduction}\label{sec1}

It is  imminent the important role played by Wishart and beta distributions type I and II in the context
of multivariate statistics. In particular, the relationship between these two distributions to obtain the
beta distribution in terms of the distribution of two Wishart matrices.

In the last three decades, the family of elliptical contoured distributions has emerged as a robust
alternative for dealing with non-normal samples. A number of well known distributions belong to this
class, such is the case of normal, t, Bessel, Kotz type, logistic, Pearson type II and IV, among many
others; but also infinitely many new distributions can be constructed by choosing a suitable kernel
corresponding to a measurable function. Elliptical distributions are characterized by several properties,
but for the context of sampling, their large variability of kurtosis and weight of tails, can assure a
best explanation of the sample, rather than the usual forced fit to a normal model in presence of non
explicable extremal points. Another important property of this set resides in the normal invariant
statistics; i.e., assume that certain random matrix $\mathbf{X}$ follows a matrix multivariate elliptical
distribution, then, many matrices of the form  $\mathbf{Y} = f(\mathbf{X})$, for special functions
$f(\cdot)$, are invariant under the complete family of elliptical distribution, in the sense that the
distribution of  $\mathbf{Y}$ is invariant, independently of the particular distribution of $\mathbf{X}$,
in fact the distribution of $\mathbf{Y}$ coincides with the case where $\mathbf{X}$ has a matrix
multivariate normal distribution, see \citet{fz:90} and \citet{gv:93}.

However, we must note that the addressed invariance only holds under a probabilistic dependence
assumption; i.e., if the matrix
 $\mathbf{X} = \left [\begin{array}{c}
           \mathbf{X}_{1} \\
           \mathbf{X}_{2}
         \end{array}
\right]$ follows an elliptical distribution, then  $\mathbf{X}_{1}$ and $\mathbf{X}_{2}$ must be
probabilistically dependent (recall that $\mathbf{X}_{1}$ and $\mathbf{X}_{2}$ are probabilistically
independent if $\mathbf{X}$ has a matrix multivariate normal distribution, \citet{gv:93}). Now, let
$\mathbf{X}^{T}$ denotes the transpose of $\mathbf{X}$ and consider a non negative definite matrix
$\mathbf{B}$, where $\mathbf{B}^{1/2}$ denotes its non negative squared root, see \citet{fz:90}, then,
the matrix $\mathbf{F} = (\mathbf{X}^{T}_{2}\mathbf{X}_{2})^{-1/2} (\mathbf{X}^{T}_{1}\mathbf{X}_{1})
(\mathbf{X}^{T}_{2}\mathbf{X}_{2})^{-1/2}$ is said to have a matrix multivariate Beta type II
distribution, moreover, the same distribution is obtained when $\mathbf{X}$ is assumed to follow matrix
multivariate distribution, see \citet{fz:90} and \citet{gv:93}.

Now, if the random matrix $\mathbf{X}$ follows a matrix multivariate elliptical distribution and the
random matrix $\mathbf{W} = \mathbf{X}^{T}\mathbf{X}$ is defined, for every particular elliptical
distribution, $\mathbf{W}$ follows also a different distribution. The distribution of $\mathbf{W}$ is
usually known as the generalised Wishart distribution, and it heritages several properties of elliptical
contoured distributions.

By another hand some recent advances in probability has reached interesting general distributions, such
as the case of Riesz distribution, due to \citet{hl:01},  and named under the denomination of  Riesz
natural exponential family (Riesz NEF); a distribution based in an special case of the well known Riesz
measure given by \citet[p. 137]{fk:94}. In fact, Riesz distribution includes Wishart and Gamma matrix
multivariate distributions as particular cases. Now, integrating theories have also appeared in other
contexts. For example, in matrix multivariate distribution theory, some extensions from real to complex
or quaternion or octonion fields appeared separately with great theoretical effort in many cases, the
results in each field arose  unconnected each other for years; only recent a new  approach in the context
of real normed division algebras could integrate, with an unified theory, all the addressed dispersed
results in such sets of numbers. Following this tendency, recently, \citet{dg:15a} proposes two versions
of the Riesz distribution for real normed division algebras. Alternatively, \citet{dg:15b} shows that the
two versions of Riesz distributions correspond to two generalised Wishart distributions, both derived
from certain matrix multivariate elliptical distributions, which are termed matrix multivariate
Kotz-Riesz distributions.

\citet{fk:94}, and subsequently \citet{hlz:05}, propose a beta-Riesz distribution, which contains as a
special case to the beta distribution obtained in terms of the distribution Wishart, which shall be named
beta-Wishart, all this subjects in the context of simple Euclidean Jordan algebras. Such beta-Riesz
distribution is obtained analogously to the beta-Wishart distribution, but starting with a Riesz
distribution.

Based in these last two versions of the Riesz distributions, it is possible to obtain two versions of the
beta-Riesz distributions, which by analogy with the beta-Wishart distributions are termed beta-Riesz type
I. As in classical beta-Wishart distribution, in addition it is feasible to propose two version for the
beta-Riesz distribution type II. Each of the two versions for each beta-Riesz distributions of type I and
II, (both versions for each) contain as particular cases to beta-Wishart distribution type I and
beta-Wishart distribution type II, respectively.

Thus, there is no doubt about the theoretical and applied potential of Riesz distribution into the
setting in the setting of the integrative modern multivariate analysis. In general, every problem
possible ruled by a Wishart process with considerable constraints, potentially can be studied under a
more robust Riesz distribution; namely, some opportunities for consideration involve estimation of
covariance matrices and principal component estimation, under any of the classical or bayesian
approaches. Beta-Riesz distribution, for example, arises in a natural way in bayesian inference, when the
two parameter a priori distributions are probabilistically independent. To prove this fact, recall the
example about matrix $\mathbf{X}$, referred in the third paragraph of this section, and note that several
situations consider are governed by probabilistically independent matrices $\mathbf{X}_{1}$ and
$\mathbf{X}_{2}$, both of them following a Kotz-Riesz elliptical distribution. Under these assumptions,
the distribution of the corresponding random matrix $\mathbf{F}$ does not follow a beta-Wishart, but the
beta-Riesz distribution. Then, if the two parameters, $\boldsymbol{\delta}_{1}$ and
$\boldsymbol{\delta}_{2}$ are  distributed as $(\mathbf{X}^{T}_{1}\mathbf{X}_{1})$ and
$(\mathbf{X}^{T}_{2}\mathbf{X}_{2})$, respectively; i.e. $\boldsymbol{\delta}_{1}$ and
$\boldsymbol{\delta}_{2}$ follows independent  Riesz distributions, then the parameter matrix defined as
$\mathbf{F} = \boldsymbol{\delta}^{-1/2}_{2}\boldsymbol{\delta}_{1}\boldsymbol{\delta}^{-1/2}_{2}$, has a
beta-Riesz distribution.

Although during the 90's and 2000's were obtained important results in theory of random matrices
distributions, the  past 30 years have reached a substantial development. Essentially, these advances
have been archived through two approaches based on the \emph{theory of Jordan algebras} and the \emph{
theory of real normed division algebras}. A basic source of the mathematical tools of theory of random
matrices distributions under Jordan algebras can be found in \citet{fk:94}; and specifically, some works
in the context of theory of random matrices distributions based on Jordan algebras are provided in
\citet{m:94}, \citet{cl:96}, \citet{hl:01}, and \citet{hlz:05}, and the references therein. Parallel
results on theory of random matrices distributions based on real normed division algebras have been also
developed in random matrix theory and statistics, see for example \citet{gr:87}, \citet{d:02}, \citet
{f:05}, \citet{dggj:11,dggj:13}, among others. In  addition, from mathematical point of view, several
basic properties of the matrix multivariate Riesz distribution under \emph{the structure theory of normal
$j$-algebras}  and under \emph{theory of Vinberg algebras} in place of Jordan algebras have been studied,
see \citet{i:00} and \citet{bh:09}, respectively.

From a applied point of view, the relevance of \emph{the octonions} remains unclear. An excellent review
of the history, construction and many other properties of octonions is given in \citet{b:02}, where it is
stated that... \textbf{However, there is still no \emph{proof} that the octonions are useful for
understanding the real world}. We can only hope that eventually this question will be settled one way or
another."

For the sake of completeness, in this article the case of octonions is considered, but the veracity of
the results obtained for this case can only be conjectured. Nonetheless, \citet[Section 1.4.5, pp.
22-24]{f:05} it is proved that the bi-dimensional density function of the eigenvalue, for a Gaussian
ensemble of a $2 \times 2$ octonionic matrix, is obtained from the general joint density function of the
eigenvalues for the Gaussian ensemble, assuming $m = 2$ and $\beta = 8$, see Section \ref{sec2}.
Moreover, as is established in \citet{fk:94} and \citet{S:97} the result obtained in this article are
valid for the \emph{algebra of Albert}, that is when hermitian matrices ($\mathbf{S}$) or hermitian
product of matrices ($\mathbf{X}^{*}\mathbf{X}$) are $3 \times 3$ octonionic matrices.

This article studies two versions for beta-Riesz distributions type I and II for real normed division
algebras. Section \ref{sec2} reviews some definitions and notation on real normed division algebras. And
also, introduces other mathematical tools as three Jacobians with respect to Lebesgue measure and some
integral results for real normed division algebras. Section \ref{sec3} proposes diverse properties of two
versions of the Riesz distributions as their Bartlett decompositions, inverse Riesz distributions, the
distribution of the generalized variance and the distribution of their eigenvalues.  Section \ref{sec4}
introduces two generalised beta functions and, in terms of these, two beta-Riesz distributions of type I
and II are obtained for real normed division algebras. Also, the relationship between the Riesz
distributions and the beta-Riesz distributions are studied. This section concludes studying the
eigenvalues distributions of beta-Riesz distributions type I and II in their two versions for real normed
division algebras.

\section{Preliminary results}\label{sec2}

A detailed discussion of real normed division algebras may be found in \citet{b:02} and \citet{E:90}. For
convenience, we shall introduce some notation, although in general we adhere to standard notation forms.

For our purposes: Let $\mathbb{F}$ be a field. An \emph{algebra} $\mathfrak{A}$ over $\mathbb{F}$ is a
pair $(\mathfrak{A};m)$, where $\mathfrak{A}$ is a \emph{finite-dimensional vector space} over
$\mathbb{F}$ and \emph{multiplication} $m : \mathfrak{A} \times \mathfrak{A} \rightarrow A$ is an
$\mathbb{F}$-bilinear map; that is, for all $\lambda \in \mathbb{F},$ $x, y, z \in \mathfrak{A}$,
\begin{eqnarray*}
  m(x, \lambda y + z) &=& \lambda m(x; y) + m(x; z) \\
  m(\lambda x + y; z) &=& \lambda m(x; z) + m(y; z).
\end{eqnarray*}
Two algebras $(\mathfrak{A};m)$ and $(\mathfrak{E}; n)$ over $\mathbb{F}$ are said to be
\emph{isomorphic} if there is an invertible map $\phi: \mathfrak{A} \rightarrow \mathfrak{E}$ such that
for all $x, y \in \mathfrak{A}$,
$$
  \phi(m(x, y)) = n(\phi(x), \phi(y)).
$$
By simplicity, we write $m(x; y) = xy$ for all $x, y \in \mathfrak{A}$.

Let $\mathfrak{A}$ be an algebra over $\mathbb{F}$. Then $\mathfrak{A}$ is said to be
\begin{enumerate}
  \item \emph{alternative} if $x(xy) = (xx)y$ and $x(yy) = (xy)y$ for all $x, y \in \mathfrak{A}$,
  \item \emph{associative} if $x(yz) = (xy)z$ for all $x, y, z \in \mathfrak{A}$,
  \item \emph{commutative} if $xy = yx$ for all $x, y \in \mathfrak{A}$, and
  \item \emph{unital} if there is a $1 \in \mathfrak{A}$ such that $x1 = x = 1x$ for all $x \in \mathfrak{A}$.
\end{enumerate}
If $\mathfrak{A}$ is unital, then the identity 1 is uniquely determined.

An algebra $\mathfrak{A}$ over $\mathbb{F}$ is said to be a \emph{division algebra} if $\mathfrak{A}$ is
nonzero and $xy = 0_{\mathfrak{A}} \Rightarrow x = 0_{\mathfrak{A}}$ or $y = 0_{\mathfrak{A}}$ for all
$x, y \in \mathfrak{A}$.

The term ``division algebra", comes from the following proposition, which shows that, in such an algebra,
left and right division can be unambiguously performed.

Let $\mathfrak{A}$ be an algebra over $\mathbb{F}$. Then $\mathfrak{A}$ is a division algebra if, and
only if, $\mathfrak{A}$ is nonzero and for all $a, b \in \mathfrak{A}$, with $b \neq 0_{\mathfrak{A}}$,
the equations $bx = a$ and $yb = a$ have unique solutions $x, y \in \mathfrak{A}$.

In the sequel we assume $\mathbb{F} = \Re$ and consider classes of division algebras over $\Re$ or
``\emph{real division algebras}" for short.

We introduce the algebras of \emph{real numbers} $\Re$, \emph{complex numbers} $\mathfrak{C}$,
\emph{quaternions} $\mathfrak{H}$ and \emph{octonions} $\mathfrak{O}$. Then, if $\mathfrak{A}$ is an
alternative real division algebra, then $\mathfrak{A}$ is isomorphic to $\Re$, $\mathfrak{C}$,
$\mathfrak{H}$ or $\mathfrak{O}$.

Let $\mathfrak{A}$ be a real division algebra with identity $1$. Then $\mathfrak{A}$ is said to be
\emph{normed} if there is an inner product $(\cdot, \cdot)$ on $\mathfrak{A}$ such that
$$
  (xy, xy) = (x, x)(y, y) \qquad \mbox{for all } x, y \in \mathfrak{A}.
$$
If $\mathfrak{A}$ is a \emph{real normed division algebra}, then $\mathfrak{A}$ is isomorphic $\Re$,
$\mathfrak{C}$, $\mathfrak{H}$ or $\mathfrak{O}$.

There are exactly four normed division algebras: real numbers ($\Re$), complex numbers ($\mathfrak{C}$),
quaternions ($\mathfrak{H}$) and octonions ($\mathfrak{O}$), see \citet{b:02}.

Let $\mathfrak{A}$ be a division algebra over the real numbers. Then $\mathfrak{A}$ has dimension either
1, 2, 4 or 8. In other branches of mathematics, the parameters $\alpha = 2/\beta$ and $t = \beta/4$ are
used, see \citet{er:05} and \citet{k:84}, respectively.

Let ${\mathcal L}^{\beta}_{m,n}$ be the set of all $m \times n$ matrices of rank $m \leq n$ over
$\mathfrak{A}$ with $m$ distinct positive singular values, where $\mathfrak{A}$ denotes a \emph{real
finite-dimensional normed division algebra}. Let $\mathfrak{A}^{m \times n}$ be the set of all $m \times
n$ matrices over $\mathfrak{A}$. The dimension of $\mathfrak{A}^{m \times n}$ over $\Re$ is $\beta mn$.
Let $\mathbf{A} \in \mathfrak{A}^{m \times n}$, then $\mathbf{A}^{*} = \bar{\mathbf{A}}^{T}$ denotes the
usual conjugate transpose.

We denote by ${\mathfrak S}_{m}^{\beta}$ the real vector space of all $\mathbf{S} \in \mathfrak{A}^{m
\times m}$ such that $\mathbf{S} = \mathbf{S}^{*}$. In addition, let $\mathfrak{P}_{m}^{\beta}$ be the
\emph{cone of positive definite matrices} $\mathbf{S} \in \mathfrak{A}^{m \times m}$. Thus,
$\mathfrak{P}_{m}^{\beta}$ consist of all matrices $\mathbf{S} = \mathbf{X}^{*}\mathbf{X}$, with
$\mathbf{X} \in \mathfrak{L}^{\beta}_{m,n}$; then $\mathfrak{P}_{m}^{\beta}$ is an open subset of
${\mathfrak S}_{m}^{\beta}$.

Let $\mathfrak{D}_{m}^{\beta}$ consisting of all $\mathbf{D} \in \mathfrak{A}^{m \times m}$, $\mathbf{D}
= \diag(d_{1}, \dots,d_{m})$. Let $\mathfrak{T}_{U}^{\beta}(m)$ be the subgroup of all \emph{upper
triangular} matrices $\mathbf{T} \in \mathfrak{A}^{m \times m}$ such that $t_{ij} = 0$ for $1 < i < j
\leq m$.

For any matrix $\mathbf{X} \in \mathfrak{A}^{n \times m}$, $d\mathbf{X}$ denotes the\emph{ matrix of
differentials} $(dx_{ij})$. Finally, we define the \emph{measure} or volume element $(d\mathbf{X})$ when
$\mathbf{X} \in \mathfrak{A}^{m \times n}, \mathfrak{S}_{m}^{\beta}$, and $\mathfrak{D}_{m}^{\beta}$, see
\citet{d:02} and \citet{dggj:11}.

If $\mathbf{X} \in \mathfrak{A}^{m \times n}$ then $(d\mathbf{X})$ (the Lebesgue measure in
$\mathfrak{A}^{m \times n}$) denotes the exterior product of the $\beta mn$ functionally independent
variables
$$
  (d\mathbf{X}) = \bigwedge_{i = 1}^{m}\bigwedge_{j = 1}^{n}dx_{ij} \quad \mbox{ where }
    \quad dx_{ij} = \bigwedge_{k = 1}^{\beta}dx_{ij}^{(k)}.
$$

If $\mathbf{S} \in \mathfrak{S}_{m}^{\beta}$ (or $\mathbf{S} \in \mathfrak{T}_{U}^{\beta}(m)$ with
$t_{ii} >0$, $i = 1, \dots,m$) then $(d\mathbf{S})$ (the Lebesgue measure in $\mathfrak{S}_{m}^{\beta}$
or in $\mathfrak{T}_{U}^{\beta}(m)$) denotes the exterior product of the exterior product of the
$m(m-1)\beta/2 + m$ functionally independent variables,
$$
  (d\mathbf{S}) = \bigwedge_{i=1}^{m} ds_{ii}\bigwedge_{i < j}^{m}\bigwedge_{k = 1}^{\beta}
                      ds_{ij}^{(k)}.
$$
Observe, that for the Lebesgue measure $(d\mathbf{S})$ defined thus, it is required that $\mathbf{S} \in
\mathfrak{P}_{m}^{\beta}$, that is, $\mathbf{S}$ must be a non singular Hermitian matrix (Hermitian
definite positive matrix).

If $\mathbf{\Lambda} \in \mathfrak{D}_{m}^{\beta}$ then $(d\mathbf{\Lambda})$ (the Legesgue measure in
$\mathfrak{D}_{m}^{\beta}$) denotes the exterior product of the $\beta m$ functionally independent
variables
$$
  (d\mathbf{\Lambda}) = \bigwedge_{i = 1}^{n}\bigwedge_{k = 1}^{\beta}d\lambda_{i}^{(k)}.
$$

Now, we show three Jacobians in terms of the $\beta$ parameter, which are proposed as extensions of real,
complex or quaternion cases, see \citet{dggj:11}.

\begin{lem}\label{lemhlt} Let $\mathbf{X}$ and $\mathbf{Y} \in
\mathfrak{P}_{m}^{\beta}$ matrices of functionally independent variables, and let $\mathbf{Y} =
\mathbf{AXA^{*}} + \mathbf{C}$, where $\mathbf{A} \in {\mathcal L}_{m,m}^{\beta} $ and $\mathbf{C} \in
\mathfrak{P}_{m}^{\beta}$ are matrices of constants. Then
\begin{equation}\label{hlt}
    (d\mathbf{Y}) = |\mathbf{A}^{*}\mathbf{A}|^{(m-1)\beta/2+1} (d\mathbf{X}),
\end{equation}
where $|\mathbf{B}|$ denotes the determinant of $\mathbf{B}$.
\end{lem}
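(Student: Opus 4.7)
My plan is to reduce the claim to a QR-type decomposition of $\mathbf{A}$ and then compute the Jacobian of each factor directly in the $\beta$-dimensional real coordinates of $\mathfrak{S}_m^\beta$.

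Since $\mathbf{C}$ is constant, differentiation yields $d\mathbf{Y}=\mathbf{A}\,(d\mathbf{X})\,\mathbf{A}^*$, so I may take $\mathbf{C}=\mathbf{0}$ and only the linear map $\mathbf{X}\mapsto\mathbf{A}\mathbf{X}\mathbf{A}^*$ on $\mathfrak{S}_m^\beta$ is relevant. Because $\mathbf{A}\in\mathcal{L}_{m,m}^\beta$ is nonsingular, I factor $\mathbf{A}=\mathbf{Q}\mathbf{T}$ with $\mathbf{Q}\in\mathfrak{U}^\beta(m)$ and $\mathbf{T}\in\mathfrak{T}_U^\beta(m)$ having positive real diagonal entries $t_{11},\dots,t_{mm}$. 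Then $\mathbf{A}^*\mathbf{A}=\mathbf{T}^*\mathbf{T}$ with $|\mathbf{A}^*\mathbf{A}|=\prod_i t_{ii}^2$, and the transformation factorises as $\mathbf{X}\mapsto\mathbf{Z}=\mathbf{T}\mathbf{X}\mathbf{T}^*\mapsto\mathbf{Y}=\mathbf{Q}\mathbf{Z}\mathbf{Q}^*$.

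The outer map $\mathbf{Z}\mapsto\mathbf{Q}\mathbf{Z}\mathbf{Q}^*$ is an isometry of $\mathfrak{S}_m^\beta$ with respect to the inner product $\langle\mathbf{U},\mathbf{V}\rangle=\re\tr(\mathbf{U}\mathbf{V}^*)$ (from $\mathbf{Q}^*\mathbf{Q}=\mathbf{I}$ together with cyclicity of $\re\tr$), hence its Jacobian equals $1$. For the inner map, the key identity is
$$
(\mathbf{T}\mathbf{X}\mathbf{T}^*)_{ij}=t_{ii}\,x_{ij}\,\bar t_{jj}+\sum_{\substack{p\ge i,\;q\ge j\\(p,q)\ne(i,j)}}t_{ip}\,x_{pq}\,\bar t_{jq},
$$
which shows that, after ordering the index pairs $(i,j)$ with $i\le j$ in reverse-lexicographic order and refining each off-diagonal pair into its $\beta$ real components (one component when $i=j$), the real Jacobian matrix of $\mathbf{X}\mapsto\mathbf{Z}$ is triangular. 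Its diagonal block for $(i,j)$ with $i<j$ is left-multiplication by the positive real scalar $t_{ii}t_{jj}$ on $\mathfrak{A}$, of determinant $(t_{ii}t_{jj})^\beta$; its diagonal block for $i=j$ is multiplication by $t_{ii}^2$ on $\Re$.

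Aggregating, and using that each $t_{kk}$ appears in exactly $m-1$ pairs $(i,j)$ with $i<j$ and $k\in\{i,j\}$, I obtain
$$
\prod_{i<j}(t_{ii}t_{jj})^\beta\cdot\prod_i t_{ii}^2=\prod_{k=1}^m t_{kk}^{\beta(m-1)+2}=\Bigl(\prod_k t_{kk}^2\Bigr)^{\beta(m-1)/2+1}=|\mathbf{A}^*\mathbf{A}|^{\beta(m-1)/2+1},
$$
which is the advertised formula. I expect the principal obstacle to be the octonion case $\beta=8$, where non-associativity renders both the QR-type factorization and the trace-cyclicity argument non-trivial: one must justify them in the setting of the Albert algebra ($m\le 3$), or alternatively expand in coordinates and verify the triangular block structure directly, in the spirit of \citet{dggj:11}.
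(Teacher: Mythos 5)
Your proof is correct for the associative cases $\beta=1,2,4$, and there is in fact nothing in the paper to compare it with: Lemma \ref{lemhlt} is stated without proof and merely attributed to \citet{dggj:11}, so what you have written is a self-contained derivation rather than a variant of an internal argument. Your route --- factor $\mathbf{A}=\mathbf{Q}\mathbf{T}$, kill the unitary factor by observing that $\mathbf{Z}\mapsto\mathbf{Q}\mathbf{Z}\mathbf{Q}^{*}$ is an isometry of $\mathfrak{S}_{m}^{\beta}$ (hence has Jacobian of absolute value $1$), and read off the Jacobian of $\mathbf{X}\mapsto\mathbf{T}\mathbf{X}\mathbf{T}^{*}$ from its triangular structure --- is one of the two standard proofs; the other decomposes $\mathbf{A}$ into elementary matrices and checks each type separately, as in the classical real argument. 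The triangular route is the cleaner one here, because the diagonal blocks are genuine scalar multiplications on $\mathfrak{A}\cong\Re^{\beta}$ (respectively on $\Re$ for $i=j$), so the exponent count $\prod_{k}t_{kk}^{\beta(m-1)+2}=|\mathbf{A}^{*}\mathbf{A}|^{\beta(m-1)/2+1}$ drops out at once, and it also makes transparent why the same exponent governs Lemma \ref{lemch}. Two details are worth making explicit if this is written up: first, in the triangularity step a term $t_{ip}x_{pq}\bar{t}_{jq}$ with $p>q$ involves the independent coordinate $x_{qp}$, and one must check that the pair $(q,p)$ still dominates $(i,j)$ componentwise (it does, since $q\geq j\geq i$ and $p>q\geq j$), which is exactly where the Hermitian constraint enters; second, the unitary factor contributes $\pm1$ and the sign is discarded because $(d\mathbf{Y})$ is a measure. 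Your closing caveat about $\beta=8$ is well placed: non-associativity breaks both the $\mathbf{Q}\mathbf{T}$ factorisation and the cyclicity of $\mathrm{Re}\,\mathrm{tr}$, and the statement should there be restricted to $m\leq 3$ and verified in coordinates, consistent with the octonionic qualifications made elsewhere in the paper.
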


\begin{lem} [Cholesky's decomposition]\label{lemch}
Let $\mathbf{S} \in \mathfrak{P}_{m}^{\beta}$ and $\mathbf{T} \in \mathfrak{T}_{U}^{\beta}(m)$ with
$t_{ii} > 0$, $i = 1, 2, \ldots , m$. Then
\begin{equation}\label{ch}
    (d\mathbf{S}) = \left\{
                      \begin{array}{ll}
                        2^{m} \displaystyle\prod_{i = 1}^{m} t_{ii}^{(m - i)\beta + 1} (d\mathbf{T})
& \hbox{if } \ \mathbf{S} = \mathbf{T}^{*}\mathbf{T};  \\
                        2^{m} \displaystyle\prod_{i = 1}^{m} t_{ii}^{(i - 1)\beta + 1} (d\mathbf{T})
&\hbox{if } \ \mathbf{S} = \mathbf{TT}^{*}.
                      \end{array}
    \right.
\end{equation}
\end{lem}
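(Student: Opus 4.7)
The plan is to compute the Jacobian directly by choosing an ordering of the independent entries of $\mathbf{T}$ and $\mathbf{S}$ under which the Jacobian matrix becomes (block) triangular; then the exterior product $(d\mathbf{S})$ collapses to the product of the diagonal entries times $(d\mathbf{T})$. A dimension count already shows that both sides have $m + \beta m(m-1)/2$ real independent components, which is consistent with a non-vanishing Jacobian.

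For $\mathbf{S}=\mathbf{T}^{*}\mathbf{T}$, I would expand, for $i\le j$,
\[
s_{ij}\;=\;\sum_{k=1}^{i}\bar t_{ki}\,t_{kj}
        \;=\;t_{ii}\,t_{ij}+\sum_{k<i}\bar t_{ki}\,t_{kj},
\qquad
s_{ii}\;=\;t_{ii}^{2}+\sum_{k<i}|t_{ki}|^{2},
\]
using that $t_{ii}\in\Re_{>0}$. Ordering the pairs $(i,j)$ with $i\le j$ lexicographically (first by $i$, then by $j$), each $s_{ij}$ is a function only of those $t_{kl}$ with $(k,l)$ at or before $(i,j)$, so the Jacobian is triangular. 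The diagonal contributions are: the coefficient of $dt_{ii}$ in $ds_{ii}$ is $2t_{ii}$ (one real variable), and the coefficient of $dt_{ij}$ in $ds_{ij}$ for $i<j$ is the real scalar $t_{ii}$ acting on the $\beta$ real components of $dt_{ij}$, contributing $t_{ii}^{\beta}$. Multiplying over all pairs yields
\[
\prod_{i=1}^{m}2t_{ii}\;\prod_{i<j}t_{ii}^{\beta}
\;=\;2^{m}\prod_{i=1}^{m}t_{ii}^{\beta(m-i)+1},
\]
which is the first claim.

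For $\mathbf{S}=\mathbf{T}\mathbf{T}^{*}$, I would repeat the argument with the expansion
\[
s_{ij}=\sum_{k=j}^{m}t_{ik}\bar t_{jk}
     =t_{ij}\,t_{jj}+\sum_{k>j}t_{ik}\bar t_{jk}\quad(i\le j),\qquad
s_{ii}=t_{ii}^{2}+\sum_{k>i}|t_{ik}|^{2},
\]
but now ordering the pairs $(i,j)$ in reverse lexicographic order ($i$ descending, then $j$ descending). With this order each $s_{ij}$ depends only on $t_{kl}$ at or before $(i,j)$; the diagonal coefficient is $2t_{ii}$ for $i=j$ and $t_{jj}$ (acting as a real scalar on $\beta$ components) for $i<j$, producing $\prod_{j}t_{jj}^{\beta(j-1)}\cdot\prod_{i}2t_{ii}=2^{m}\prod_{i}t_{ii}^{\beta(i-1)+1}$.

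The one point that genuinely needs care is that $\mathfrak A$ may be non-commutative (for $\mathfrak H$) or non-associative (for $\mathfrak O$), so in principle the products $\bar t_{ki}t_{kj}$ appearing in the expansions need not behave like ordinary scalars. The crucial observation is that the only multiplications that enter the \emph{diagonal} entries of the Jacobian are multiplications by the real positive numbers $t_{ii}$ (respectively $t_{jj}$), and multiplication by a real scalar in any normed division algebra is simply componentwise scaling on the $\beta$-dimensional real vector space underlying each entry. Consequently the diagonal computation and the extraction of the $t_{ii}^{\beta}$ (resp.\ $t_{jj}^{\beta}$) factors is uniform in $\beta\in\{1,2,4,8\}$, and alternativity suffices to make the off-diagonal (already irrelevant) products well defined. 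This will be the main technical point to verify explicitly, but it introduces no new factors into the final Jacobian.
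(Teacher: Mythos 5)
Your proof is correct. The paper itself gives no argument for this lemma --- it is stated as a known Jacobian imported from the cited reference (D\'{\i}az-Garc\'{\i}a and Guti\'errez-J\'aimez, 2011) --- and what you have written is precisely the standard triangularization argument that underlies it: order the independent entries so that the Jacobian matrix of $(i,j)\mapsto s_{ij}$ is block triangular, read off the diagonal blocks ($2t_{ii}$ on the $m$ real diagonal variables, scalar multiplication by $t_{ii}$, resp.\ $t_{jj}$, on each $\beta$-dimensional off-diagonal entry), and multiply. Your closing remark is also the right one to make explicit: the only products entering the diagonal of the Jacobian are left or right multiplications by the real positive scalars $t_{ii}$, which act as componentwise scaling on $\Re^{\beta}$ regardless of whether $\mathfrak{A}$ is commutative or associative, so the exponents $\beta(m-i)$ and $\beta(i-1)$ come out uniformly for $\beta\in\{1,2,4,8\}$.
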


\begin{lem}\label{lemi}
Let $\mathbf{S} \in \mathfrak{P}_{m}^{\beta}.$ Then, ignoring the sign, if $\mathbf{Y} = \mathbf{S}^{-1}+
\mathbf{C}$, $\mathbf{C} \in \mathfrak{P}_{m}^{\beta}$ is a matrix of constants,
\begin{equation}\label{i}
    (d\mathbf{Y}) = |\mathbf{S}|^{-\beta(m - 1) - 2}(d\mathbf{S}).
\end{equation}
\end{lem}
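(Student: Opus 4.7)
The plan is to derive the Jacobian of the inversion map $\mathbf{S}\mapsto\mathbf{Y}=\mathbf{S}^{-1}$ on $\mathfrak{P}_m^\beta$ by differentiating the defining identity $\mathbf{S}\mathbf{Y}=\mathbf{I}_m$ and then invoking the affine Jacobian of Lemma~\ref{lemhlt} with the appropriate ``coefficient'' matrix. This avoids any explicit parameterisation (such as Cholesky), and delivers the exponent $-\beta(m-1)-2$ essentially in one line of matrix calculus.

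Concretely, I would start by taking differentials of $\mathbf{S}\mathbf{Y}=\mathbf{I}_m$, giving $(d\mathbf{S})\mathbf{Y}+\mathbf{S}(d\mathbf{Y})=\mathbf{0}$. Right-multiplying by $\mathbf{Y}^{-1}=\mathbf{S}$ and using $\mathbf{S}^{*}=\mathbf{S}$ yields
$$
d\mathbf{S}= -\,\mathbf{S}\,(d\mathbf{Y})\,\mathbf{S}^{*}.
$$
At each fixed $\mathbf{S}\in\mathfrak{P}_m^\beta$ this is a linear isomorphism of $\mathfrak{S}_m^\beta$ onto itself of exactly the form $\mathbf{X}\mapsto\mathbf{A}\mathbf{X}\mathbf{A}^{*}+\mathbf{C}$ covered by Lemma~\ref{lemhlt}, with $\mathbf{A}=-\mathbf{S}\in\mathcal{L}_{m,m}^{\beta}$ and $\mathbf{C}=\mathbf{0}$. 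Applying that lemma, together with $|\mathbf{S}^{*}\mathbf{S}|=|\mathbf{S}|^{2}$, gives
$$
(d\mathbf{S})=|\mathbf{S}^{*}\mathbf{S}|^{\beta(m-1)/2+1}\,(d\mathbf{Y})=|\mathbf{S}|^{\beta(m-1)+2}\,(d\mathbf{Y}),
$$
which, up to sign, is exactly the desired identity.

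The main point I would take care with in the write-up is that the ``coefficient'' $\mathbf{A}=-\mathbf{S}$ in $d\mathbf{S}=-\mathbf{S}(d\mathbf{Y})\mathbf{S}^{*}$ depends on $\mathbf{Y}$, whereas Lemma~\ref{lemhlt} is stated for a constant $\mathbf{A}$. This is benign: the Jacobian of a smooth map between open subsets of the same real vector space is computed pointwise, so at every fixed $\mathbf{Y}$ one ``freezes'' $\mathbf{S}$ and identifies the differential of the inversion with the linear map of Lemma~\ref{lemhlt}, whence the Jacobian determinants coincide. A secondary concern is non-associativity when $\mathfrak{A}=\mathfrak{O}$, but every product in the argument involves only a Hermitian matrix together with its inverse, for which the alternative law suffices to make the manipulations unambiguous. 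The overall minus sign introduced by $-\mathbf{S}$ contributes only an overall factor of $\pm 1$, which is precisely what the ``ignoring the sign'' hypothesis in the statement of Lemma~\ref{lemi} permits one to discard.
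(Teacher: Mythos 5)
Your argument is correct, and the exponent comes out right: differentiating $\mathbf{S}\mathbf{Y}=\mathbf{I}_m$ gives $d\mathbf{S}=-\mathbf{S}(d\mathbf{Y})\mathbf{S}^{*}$, and applying the Jacobian of Lemma~\ref{lemhlt} to this (pointwise) linear map yields $(d\mathbf{S})=|\mathbf{S}|^{\beta(m-1)+2}(d\mathbf{Y})$, i.e.\ \eqref{i} up to sign; the sanity checks $\beta=1$ (exponent $-(m+1)$) and $\beta=2$ (exponent $-2m$) agree with the classical real and complex results. There is nothing in the paper to compare against: Lemma~\ref{lemi} is stated without proof and simply cited from \citet{dggj:11}, so your derivation supplies an argument the paper omits, and it is the standard one (differentiate the identity $\mathbf{S}\mathbf{S}^{-1}=\mathbf{I}_m$ and read off the determinant of the congruence $\mathbf{X}\mapsto\mathbf{S}\mathbf{X}\mathbf{S}^{*}$ on $\mathfrak{S}_m^{\beta}$). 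Your two caveats are the right ones to flag: freezing $\mathbf{S}$ when computing the pointwise differential is exactly how one passes from the affine statement of Lemma~\ref{lemhlt} to a nonlinear change of variables, and the octonionic case is no worse here than in the statement of Lemma~\ref{lemhlt} itself, which the paper already asserts for $\beta=8$. One minor technical remark: Lemma~\ref{lemhlt} as stated requires $\mathbf{A}\in\mathcal{L}_{m,m}^{\beta}$, i.e.\ distinct singular values, which a general $\mathbf{S}\in\mathfrak{P}_m^{\beta}$ need not have; this is harmless (the formula extends by continuity, or one restricts to the complement of a null set), but it is worth a parenthetical in a careful write-up.
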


Next is stated a general result, that is useful in a variety of situations, which enable us to transform
the density function of a matrix $\mathbf{X} \in \mathfrak{P}_{m}^{\beta}$ to the density function of its
eigenvalues, see \citet{dggj:11}.

\begin{lem}\label{eig}
Let $\mathbf{X} \in \mathfrak{P}_{m}^{\beta}$ be a random matrix with a density function
$f_{_{\mathbf{X}}}(\mathbf{X})$. Then the joint density function of the eigenvalues $\lambda_{1}, \dots,
\lambda_{m}$ of $\mathbf{X}$ is
\begin{equation}\label{dfeig}
    \frac{\pi^{m^{2}\beta/2+ \varrho}}{\Gamma_{m}^{\beta}[m\beta/2]} \prod_{i<
    j}^{m}(\lambda_{i} -\lambda_{j})^{\beta}\int_{\mathbf{H} \in
    \mathfrak{U}^{\beta}(m)}f(\mathbf{HLH}^{*})(d\mathbf{H})
\end{equation}
where $\mathbf{L} = \diag(\lambda_{1}, \dots, \lambda_{m})$, $\lambda_{1}> \cdots > \lambda_{m}> 0$,
$(d\mathbf{H})$ is the normalised Haar measure, $\Gamma_{m}^{\beta}[a]$ denotes the Gamma function for
the space $\mathfrak{S}^{\beta}_{m}$ \citep{gr:87} and
$$
  \varrho = \left\{
             \begin{array}{rl}
               0, & \beta = 1; \\
               -m, & \beta = 2; \\
               -2m, & \beta = 4; \\
               -4m, & \beta = 8.
             \end{array}
           \right.
$$
\end{lem}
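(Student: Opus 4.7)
The plan is to apply the spectral decomposition $\mathbf{X}=\mathbf{HLH}^{*}$, with $\mathbf{H}\in\mathfrak{U}^{\beta}(m)$ and $\mathbf{L}=\diag(\lambda_{1},\dots,\lambda_{m})$ ordered so that $\lambda_{1}>\cdots>\lambda_{m}>0$, compute the Jacobian of this change of variables, and then integrate out $\mathbf{H}$ against the normalized Haar measure.

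First, I would differentiate $\mathbf{X}=\mathbf{HLH}^{*}$ and form $\mathbf{H}^{*}\,d\mathbf{X}\,\mathbf{H}=d\mathbf{L}+d\mathbf{A}\,\mathbf{L}-\mathbf{L}\,d\mathbf{A}$, where $d\mathbf{A}=\mathbf{H}^{*}d\mathbf{H}$ is skew-Hermitian because $\mathbf{H}^{*}\mathbf{H}=\mathbf{I}_{m}$. Reading off entries, the $(i,i)$ entry equals $d\lambda_{i}$ while the off-diagonal $(i,j)$ entry equals $(\lambda_{j}-\lambda_{i})\,(d\mathbf{A})_{ij}$. Since the conjugation $\mathbf{X}\mapsto\mathbf{H}^{*}\mathbf{X}\mathbf{H}$ has unit Jacobian by Lemma \ref{lemhlt} (with $|\mathbf{H}^{*}\mathbf{H}|=1$), wedging all $\beta$ real components of every independent entry produces
$$
(d\mathbf{X})=c_{m,\beta}\prod_{i<j}^{m}(\lambda_{i}-\lambda_{j})^{\beta}\,(d\mathbf{L})\,(\mathbf{H}^{*}d\mathbf{H}),
$$
for a constant $c_{m,\beta}$ depending only on $m$ and $\beta$. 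Only the off-diagonal components of $d\mathbf{A}$ enter this wedge; its skew-Hermitian diagonal is absorbed by the non-uniqueness of the spectral decomposition under the diagonal \emph{phase} subgroup of $\mathfrak{U}^{\beta}(m)$.

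Second, I would pin down $c_{m,\beta}$ by a normalization argument. Substituting the reference density $\etr(-\mathbf{X})|\mathbf{X}|^{a-(m-1)\beta/2-1}/\Gamma_{m}^{\beta}[a]$ into the Jacobian identity, using that the left-hand side integrates to unity by the multivariate gamma integral and that the right-hand side factorizes into a classical Selberg-type integral in $\lambda_{1},\dots,\lambda_{m}$ times $\Vol(\mathfrak{U}^{\beta}(m))$ from (\ref{vol}), identifies $c_{m,\beta}=\pi^{m^{2}\beta/2+\varrho}/\Gamma_{m}^{\beta}[m\beta/2]$, with the exponent $\varrho$ compensating for the overcounting produced by the phase stabilizer of $\mathbf{L}$.

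Finally, substituting $\mathbf{X}=\mathbf{HLH}^{*}$ into $f_{\mathbf{X}}(\mathbf{X})(d\mathbf{X})$ and integrating $\mathbf{H}$ over $\mathfrak{U}^{\beta}(m)$ against the normalized Haar measure $(d\mathbf{H})$ yields (\ref{dfeig}). The main obstacle is identifying $\varrho$ uniformly across the four division algebras: it vanishes for $\beta=1$, where the stabilizer is a finite group of order $2^{m}$, whereas for $\beta=2,4,8$ the stabilizer is a positive-dimensional compact group (an $m$-torus, an $m$-fold product of unit quaternions, and the octonion analogue, respectively) whose volume yields the factors $\pi^{-m}$, $\pi^{-2m}$, $\pi^{-4m}$ that appear in the exponent.
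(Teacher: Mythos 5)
Your proposal is sound in outline, but note first that the paper does not prove Lemma \ref{eig} at all: it is quoted verbatim from \citet{dggj:09b}, so there is no internal argument to compare against. What you give is the standard spectral-decomposition proof, and its skeleton is correct: the identity $\mathbf{H}^{*}d\mathbf{X}\,\mathbf{H}=d\mathbf{L}+d\mathbf{A}\,\mathbf{L}-\mathbf{L}\,d\mathbf{A}$ with $d\mathbf{A}=\mathbf{H}^{*}d\mathbf{H}$ skew-Hermitian, the reading-off of $(\lambda_{j}-\lambda_{i})(d\mathbf{A})_{ij}$ on the off-diagonal (each carrying $\beta$ real components, hence the exponent $\beta$ on the Vandermonde factor), the disappearance of the skew diagonal of $d\mathbf{A}$ reflecting the stabilizer of $\mathbf{L}$, and the final integration over $\mathfrak{U}^{\beta}(m)$. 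One imprecision worth fixing: the wedge-product identity $(d\mathbf{X})=\prod_{i<j}(\lambda_{i}-\lambda_{j})^{\beta}(d\mathbf{L})\wedge(\mathbf{H}^{*}d\mathbf{H})$ is exact with \emph{no} constant when $(\mathbf{H}^{*}d\mathbf{H})$ denotes the unnormalised exterior form on the off-diagonal entries; the constant $c_{m,\beta}$ arises only afterwards, when you replace that form by the normalised Haar measure $(d\mathbf{H})$ (contributing $\Vol(\mathfrak{U}^{\beta}(m))$ from (\ref{vol})) and quotient by the stabilizer of $\mathbf{L}$. Keeping these two steps separate makes the normalisation argument cleaner and avoids the appearance of circularity.

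The one genuine soft spot is your identification of $\varrho$. Your heuristic (stabilizer $=$ finite group of order $2^{m}$ for $\beta=1$, an $m$-torus of volume $(2\pi)^{m}$ for $\beta=2$, $(\Vol S^{3})^{m}=(2\pi^{2})^{m}$ for $\beta=4$) correctly reproduces $\varrho=0,-m,-2m$ after cancelling the $2^{m}$ in $\Vol(\mathfrak{U}^{\beta}(m))$. But for $\beta=8$ the same computation gives $\Vol(S^{7})=2\pi^{4}/\Gamma(4)=\pi^{4}/3$, not $2\pi^{4}$, so neither the stabilizer-volume heuristic nor your Selberg normalisation (which produces a factor $\Gamma(\beta/2)^{m}=6^{m}$ that is not a power of $\pi$) literally yields $\varrho=-4m$. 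The value $-4m$ in the statement is the formal extrapolation of the pattern used in \citet{dggj:09b} for the octonionic case, where the spectral decomposition itself is only formal; you should either restrict the argument to $\beta\in\{1,2,4\}$ or state explicitly that the $\beta=8$ constant is taken by convention from the cited source rather than derived.
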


Finally, let's recall the multidimensional convolution theorem in terms of the Laplace transform. For
this purpose,  let's use the complexification $\mathfrak{S}_{m}^{\beta, \mathfrak{C}} =
\mathfrak{S}_{m}^{\beta} + i \mathfrak{S}_{m}^{\beta}$ of $\mathfrak{S}_{m}^{\beta}$. That is,
$\mathfrak{S}_{m}^{\beta, \mathfrak{C}}$ consist of all matrices $\mathbf{Z} \in
(\mathfrak{F^{\mathfrak{C}}})^{m \times m}$ of the form $\mathbf{Z} = \mathbf{X} + i\mathbf{Y}$, with
$\mathbf{X}, \mathbf{Y} \in \mathfrak{S}_{m}^{\beta}$. It comes to $\mathbf{X} = \re(\mathbf{Z})$ and
$\mathbf{Y} = \im(\mathbf{Z})$ as the \emph{real and imaginary parts} of $\mathbf{Z}$, respectively. The
\emph{generalised right half-plane} $\mathbf{\Phi}_{m}^{\beta} = \mathfrak{P}_{m}^{\beta} + i
\mathfrak{S}_{m}^{\beta}$ in $\mathfrak{S}_{m}^{\beta,\mathfrak{C}}$ consists of all $\mathbf{Z} \in
\mathfrak{S}_{m}^{\beta,\mathfrak{C}}$ such that $\re(\mathbf{Z}) \in \mathfrak{P}_{m}^{\beta}$, see
\cite[p. 801]{gr:87}.

\begin{defn}
If $f(\mathbf{X})$ is a function of $\mathbf{X} \in \mathfrak{P}_{m}^{\beta}$, the Laplace transform of
$f(\mathbf{X})$ is defined to be
\begin{equation}\label{LT}
    g(\mathbf{T}) = \int_{\mathbf{X} \in \mathfrak{P}_{m}^{\beta}}
  \etr\{-\mathbf{XZ}\}f(\mathbf{X})(d\mathbf{X}).
\end{equation}
where $\mathbf{Z} \in \mathbf{\Phi}_{m}^{\beta}$ and $\etr(\cdot) = \exp(\tr(\cdot))$.
\end{defn}

\begin{lem}\label{lemC}
If $g_{1}(\mathbf{Z})$ and $g_{2}(\mathbf{Z})$ are the respective Laplace transforms of the densities
$f_{_{\mathbf{X}}}(\mathbf{X})$ and $g_{_{\mathbf{Y}}}(\mathbf{Y})$ then the product
$g_{1}(\mathbf{Z})g_{2}(\mathbf{Z})$ is the Laplace transform of the convolution
$f_{_{\mathbf{X}}}(\mathbf{X})\ast g_{_{\mathbf{Y}}}(\mathbf{Y})$, where
\begin{equation}\label{conv}
    h_{_{\mathbf{\Xi}}} (\mathbf{\Xi}) = f_{_{\mathbf{X}}}(\mathbf{X})\ast g_{_{\mathbf{Y}}}(\mathbf{Y})
    = \int_{\mathbf{0} < \mathbf{\Upsilon} < \mathbf{\Xi}} f_{_{\mathbf{X}}}(\mathbf{\Upsilon})
   g_{_{\mathbf{Y}}}(\mathbf{\Xi}-\mathbf{\Upsilon})(d\mathbf{\Upsilon}),
\end{equation}
with $\mathbf{\Xi} = \mathbf{X}+\mathbf{Y}$ and $\mathbf{\Upsilon} = \mathbf{X}$.
\end{lem}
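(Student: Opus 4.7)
The plan is to mirror the classical convolution proof but in the symmetric cone setting, using Lemma \ref{lemhlt} to track the Jacobian. Starting from the definition of the Laplace transform in (\ref{LT}), I would write the product as a double integral
\[
g_{1}(\mathbf{Z})g_{2}(\mathbf{Z}) = \int_{\mathbf{X} \in \mathfrak{P}_{m}^{\beta}} \int_{\mathbf{Y} \in \mathfrak{P}_{m}^{\beta}} \etr\{-(\mathbf{X}+\mathbf{Y})\mathbf{Z}\}\,f_{_{\mathbf{X}}}(\mathbf{X})\,g_{_{\mathbf{Y}}}(\mathbf{Y})\,(d\mathbf{X})(d\mathbf{Y}),
\]
where the factorisation of the exponential uses $\etr\{-\mathbf{XZ}\}\etr\{-\mathbf{YZ}\} = \etr\{-(\mathbf{X}+\mathbf{Y})\mathbf{Z}\}$ and Fubini's theorem is justified by the fact that $\re(\mathbf{Z}) \in \mathfrak{P}_{m}^{\beta}$ makes the integrand absolutely integrable against the two densities.

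Next I would apply the change of variables $\mathbf{\Upsilon} = \mathbf{X}$, $\mathbf{\Xi} = \mathbf{X}+\mathbf{Y}$, holding $\mathbf{\Upsilon}$ fixed when computing the Jacobian of $\mathbf{Y}\mapsto\mathbf{\Xi}$. By Lemma \ref{lemhlt} applied with $\mathbf{A} = \mathbf{I}_{m}$ and $\mathbf{C} = \mathbf{\Upsilon}$, we get $(d\mathbf{Y}) = (d\mathbf{\Xi})$, so the differential form is preserved. The key geometric point is the transformation of the region: as $(\mathbf{X},\mathbf{Y})$ ranges over $\mathfrak{P}_{m}^{\beta}\times\mathfrak{P}_{m}^{\beta}$, the cone is closed under addition so $\mathbf{\Xi} \in \mathfrak{P}_{m}^{\beta}$, while $\mathbf{Y} = \mathbf{\Xi}-\mathbf{\Upsilon} \in \mathfrak{P}_{m}^{\beta}$ translates into the order condition $\mathbf{0} < \mathbf{\Upsilon} < \mathbf{\Xi}$ in the Loewner ordering. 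Hence
\[
g_{1}(\mathbf{Z})g_{2}(\mathbf{Z}) = \int_{\mathbf{\Xi} \in \mathfrak{P}_{m}^{\beta}} \etr\{-\mathbf{\Xi Z}\} \left[\int_{\mathbf{0} < \mathbf{\Upsilon} < \mathbf{\Xi}} f_{_{\mathbf{X}}}(\mathbf{\Upsilon})\,g_{_{\mathbf{Y}}}(\mathbf{\Xi}-\mathbf{\Upsilon})\,(d\mathbf{\Upsilon})\right](d\mathbf{\Xi}).
\]

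Finally, the bracketed inner integral is precisely the convolution $h_{_{\mathbf{\Xi}}}(\mathbf{\Xi})$ defined in (\ref{conv}), so the outer integral is the Laplace transform of $h_{_{\mathbf{\Xi}}}$, which gives the claim. The main obstacle is not the algebraic manipulation but the careful verification that the bijection $(\mathbf{X},\mathbf{Y})\leftrightarrow(\mathbf{\Upsilon},\mathbf{\Xi})$ between $\mathfrak{P}_{m}^{\beta}\times\mathfrak{P}_{m}^{\beta}$ and $\{(\mathbf{\Upsilon},\mathbf{\Xi}): \mathbf{0}<\mathbf{\Upsilon}<\mathbf{\Xi}\}$ is measure-preserving, which reduces to invoking Lemma \ref{lemhlt} with trivial $\mathbf{A}$; once that is in place, swapping the order of integration by Fubini delivers the result.
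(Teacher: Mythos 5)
Your argument is correct and is the standard one: the paper states this lemma without proof, merely recalling it as the classical multidimensional convolution theorem, so there is no authorial proof to diverge from. Your chain — factor the exponential by linearity of the trace, change variables $(\mathbf{X},\mathbf{Y})\mapsto(\mathbf{\Upsilon},\mathbf{\Xi})$ with unit Jacobian via Lemma \ref{lemhlt} with $\mathbf{A}=\mathbf{I}_{m}$, identify the image region as $\mathbf{0}<\mathbf{\Upsilon}<\mathbf{\Xi}$ in the Loewner order, and apply Fubini (justified since $|\etr\{-\mathbf{XZ}\}|\leq 1$ for $\re(\mathbf{Z})\in\mathfrak{P}_{m}^{\beta}$) — is exactly the proof the paper implicitly relies on.
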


\section{Riesz distributions}\label{sec3}

This section shows two versions of the Riesz distributions \citep{dg:15a} and the study of their Bartlett
decompositions. Also,  inverse Riesz distributions and the joint density of its eigenvalues are obtained.

\begin{defn}\label{defnRd}
Let $\mathbf{\Sigma} \in \mathbf{\Phi}_{m}^{\beta}$ and  $\kappa = (k_{1}, k_{2}, \dots, k_{m}) \in
\Re^{m}$.
\begin{enumerate}
  \item Then it is said that $\mathbf{X}$ has a Riesz distribution of type I if its density function is
  \begin{equation}\label{dfR1}
    \frac{\beta^{am+\sum_{i = 1}^{m}k_{i}}}{\Gamma_{m}^{\beta}[a,\kappa] |\mathbf{\Sigma}|^{a}q_{\kappa}(\mathbf{\Sigma})}
    \etr\{-\beta\mathbf{\Sigma}^{-1}\mathbf{X}\}|\mathbf{X}|^{a-(m-1)\beta/2 - 1}
    q_{\kappa}(\mathbf{X})(d\mathbf{X})
  \end{equation}
  for $\mathbf{X} \in \mathfrak{P}_{m}^{\beta}$ and $\re(a) \geq (m-1)\beta/2 - k_{m}$; where $\Gamma_{m}^{\beta}[a,\kappa]$
  is the generalised gamma function of weight $\kappa$ and $q_{\kappa}(\mathbf{A})$ is the highest wight vector or generalised
  power of $\mathbf{A}$\citep[see][]{gr:87}; denoting this fact as $\mathbf{X} \sim \mathfrak{R}^{\beta, I}_{m}(a,\kappa,
  \mathbf{\Sigma})$.
  \item Then it is said that $\mathbf{X}$ has a Riesz distribution of type II if its density function is
  \begin{equation}\label{dfR2}
     \frac{\beta^{am-\sum_{i = 1}^{m}k_{i}}}{\Gamma_{m}^{\beta}[a,-\kappa]
   |\mathbf{\Sigma}|^{a}q_{\kappa}(\mathbf{\Sigma}^{-1})}\etr\{-\beta\mathbf{\Sigma}^{-1}\mathbf{X}\}
  |\mathbf{X}|^{a-(m-1)\beta/2 - 1} q_{\kappa}(\mathbf{X}^{-1}) (d\mathbf{X})
  \end{equation}
  for $\mathbf{X} \in \mathfrak{P}_{m}^{\beta}$ and $\re(a) > (m-1)\beta/2 + k_{1}$; where $\Gamma_{m}^{\beta}[a,-\kappa]$
  is the generalised gamma function of weight $\kappa$ proposed by \citet{k:66};
  denoting this fact as $\mathbf{X} \sim \mathfrak{R}^{\beta, II}_{m}(a,\kappa,
  \mathbf{\Sigma})$.
\end{enumerate}
\end{defn}

\begin{thm}\label{teo1}
Let $\mathbf{\Sigma} \in \mathbf{\Phi}_{m}^{\beta}$ and  $\kappa = (k_{1}, k_{2}, \dots, k_{m}) \in
\Re^{m}$. And let $\mathbf{Y} = \mathbf{X}^{-1}$.
\begin{enumerate}
  \item Then if $\mathbf{X}$ has a Riesz distribution of type I the density of $\mathbf{Y}$ is
  \begin{equation}\label{dfIR1}
    \frac{\beta^{am+\sum_{i = 1}^{m}k_{i}}}{\Gamma_{m}^{\beta}[a,\kappa] |\mathbf{\Sigma}|^{a}q_{\kappa}(\mathbf{\Sigma})}
    \etr\{-\beta\mathbf{\Sigma}^{-1}\mathbf{Y}^{-1}\}|\mathbf{Y}|^{-(a+(m-1)\beta/2 + 1)}
    q_{\kappa}(\mathbf{Y}^{-1})(d\mathbf{Y})
  \end{equation}
  for $\re(a) \geq (m-1)\beta/2 - k_{m}$ and is termed as \emph{inverse Riesz distribution of type
  I}.
  \item Then if $\mathbf{X}$ has a Riesz distribution of type II the density of $\mathbf{Y}$ is
  \begin{equation}\label{dfIR2}
     \frac{\beta^{am-\sum_{i = 1}^{m}k_{i}}}{\Gamma_{m}^{\beta}[a,-\kappa]
   |\mathbf{\Sigma}|^{a}q_{\kappa}(\mathbf{\Sigma}^{-1})}\etr\{-\beta\mathbf{\Sigma}^{-1}\mathbf{Y}^{-1}\}
  |\mathbf{Y}|^{-(a+(m-1)\beta/2 + 1)} q_{\kappa}(\mathbf{Y}) (d\mathbf{Y})
  \end{equation}
  for $\re(a) > (m-1)\beta/2 + k_{1}$ and it said that $\mathbf{Y}$ has a \emph{inverse Riesz distribution of type
  II}.
\end{enumerate}
\end{thm}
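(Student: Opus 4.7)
The plan is to obtain both inverse Riesz densities by a single transformation-of-variables computation using Lemma \ref{lemi}, treating the two parts uniformly since they differ only in which of $q_{\kappa}(\mathbf{X})$ or $q_{\kappa}(\mathbf{X}^{-1})$ appears and in the normalizing constants. I would start by writing down the defining density \eqref{dfR1} (respectively \eqref{dfR2}) and substituting $\mathbf{X} = \mathbf{Y}^{-1}$ into every factor. The trace factor transforms trivially to $\etr\{-\mathbf{\Sigma}^{-1}\mathbf{Y}^{-1}\}$; the normalizing constant is independent of $\mathbf{X}$ and carries over unchanged.

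Next I would combine the determinant powers coming from the density and the Jacobian. From Lemma \ref{lemi} we get $(d\mathbf{X}) = |\mathbf{Y}|^{-\beta(m-1)-2}(d\mathbf{Y})$, while $|\mathbf{X}|^{a-(m-1)\beta/2-1} = |\mathbf{Y}|^{-a+(m-1)\beta/2+1}$. Multiplying these two contributions, the exponents of $|\mathbf{Y}|$ add to $-a-(m-1)\beta/2-1 = -(a+(m-1)\beta/2+1)$, which is exactly the exponent appearing in \eqref{dfIR1} and \eqref{dfIR2}. This is the central bookkeeping step and really the only place where a slip is likely; everything else is either a direct substitution or a cited identity.

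For the $q_{\kappa}$ factor I would invoke property \eqref{qk2}. In the type I case, $q_{\kappa}(\mathbf{X}) = q_{\kappa}(\mathbf{Y}^{-1})$, giving precisely \eqref{dfIR1}. In the type II case, $q_{\kappa}(\mathbf{X}^{-1}) = q_{\kappa}(\mathbf{Y})$, which yields \eqref{dfIR2}. Since the parameter ranges $\re(a)\geq(m-1)\beta/2-k_{m}$ and $\re(a)>(m-1)\beta/2+k_{1}$ depend only on the constants defining the original Riesz distributions (and are preserved by a bijective change of variables), no extra condition arises.

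The only potential obstacle is the verification that the substitution genuinely gives a valid density on $\mathfrak{P}_{m}^{\beta}$ rather than on some restricted subset; this is immediate because $\mathbf{X}\mapsto\mathbf{X}^{-1}$ is a diffeomorphism of $\mathfrak{P}_{m}^{\beta}$ onto itself. No integration over $\mathfrak{P}_{m}^{\beta}$ is needed, so the identities \eqref{int1} and \eqref{int2} defining $\Gamma_{m}^{\beta}[a,\kappa]$ and $\Gamma_{m}^{\beta}[a,-\kappa]$ do not have to be re-invoked to check normalization; the constants are forced by the bijectivity of the transformation.
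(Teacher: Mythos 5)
Your proposal is correct and follows essentially the same route as the paper: the author's proof is exactly the one-line application of Lemma \ref{lemi} together with substitution into (\ref{dfR1}) and (\ref{dfR2}), and your exponent bookkeeping $-a+(m-1)\beta/2+1-\beta(m-1)-2=-(a+(m-1)\beta/2+1)$ is the only nontrivial step, carried out correctly. Nothing further is needed.
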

\textbf{Proof. } It is immediately noted that $(d\mathbf{X}) = |\mathbf{Y}|^{-\beta(m-1)-2}(d\mathbf{Y})$
and from (\ref{dfR1}) and (\ref{dfR2}).\fin

Note that, the density function (\ref{dfIR1}) was studied previously by \citet{tz:12} in real case.

Observe that, if $\kappa = (0,0, \dots,0)$ in two densities in Definition \ref{defnRd} and Theorem
\ref{teo1} the matrix multivariate gamma  and inverse gamma distributions are obtained. As consequence,
in this last case if $\mathbf{\Sigma} = 2\mathbf{\Sigma}$ and $a = \beta n/2$, the Wishart and inverse
Wishart distributions are obtained, too.

\begin{thm}\label{teodB}
Let $\mathbf{T}\in \mathfrak{T}_{U}^{\beta}(m)$ with $t_{ii} > 0$, $i = 1, 2, \ldots , m$ and define
$\mathbf{X} = \mathbf{T}^{*}\mathbf{T}$.
\begin{enumerate}
  \item If $\mathbf{X}$ has a Riesz distribution of type I, (\ref{dfR1}), with $\mathbf{\Sigma}
   = \mathbf{I}_{m}$, then the elements $t_{ij}
  \ (1\leq i\leq j\leq m)$ of $\mathbf{T}$ are all independent. Furthermore,  $t^{2}_{ii} \sim
  \mathcal{G}^{\beta}(a+k_{i}-(i-1)\beta/2,1)$ and $\sqrt{2}t_{ij} \sim \mathcal{N}^{\beta}_{1}(0,1) \
  (1\leq i < j\leq m)$.
  \item If $\mathbf{X}$ has a Riesz distribution of type II, (\ref{dfR2}), with $\mathbf{\Sigma}
   = \mathbf{I}_{m}$, then the elements $t_{ij}
  \ (1\leq i\leq j\leq m)$ of $\mathbf{T}$ are all independent. Moreover,  $t^{2}_{ii} \sim
  \mathcal{G}^{\beta}(a-k_{i}-(i-1)\beta/2,1)$ and $\sqrt{2}t_{ij} \sim \mathcal{N}^{\beta}_{1}(0,1) \
  (1\leq i < j\leq m)$.
\end{enumerate}
Where $x \sim \mathcal{G}^{\beta}(a,\alpha)$ denotes a gamma distribution with parameters $a$ and
$\alpha$ and $y \sim \mathcal{N}^{\beta}_{1}(0,1)$ denotes a random variable with standard normal
distribution for real normed division algebras. Moreover, their respective densities are
$$
  \mathcal{G}^{\beta}(x:a,\alpha)=\frac{1}{(\alpha/\beta)^{a}\Gamma[a]}\exp\{-\beta x/\alpha\} x^{a-1} (dx),
$$
and
$$
  \mathcal{N}^{\beta}_{1}(y:0,1) = \frac{1}{(2\pi/\beta)^{\beta/2}} \exp\{-\beta y^{2}/2\}(dy)
$$
where $x \in \mathfrak{L}^{\beta}_{1,1}$, $y \in \mathfrak{P}_{1}^{\beta}$, Re$(a) > 0$ and $\alpha \in
\mathbf{\Phi}_{1}^{\beta}$, see \citet{dggj:11}.
\end{thm}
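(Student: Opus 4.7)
The plan is to apply the Cholesky change of variables $\mathbf{X}=\mathbf{T}^{*}\mathbf{T}$ from Lemma \ref{lemch} to the Riesz densities (\ref{dfR1}) and (\ref{dfR2}) with $\mathbf{\Sigma}=\mathbf{I}_{m}$, and to verify directly that the resulting joint density in $\mathbf{T}$ factors as a product of one-dimensional densities, one for each entry $t_{ij}$ with $i\le j$. Identifying each factor with the claimed gamma or normal density then completes the proof.

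First I would collect the three algebraic identities needed for the substitution. Because $\mathbf{T}$ is upper triangular with $t_{ii}>0$, the leading $p\times p$ principal block of $\mathbf{T}^{*}\mathbf{T}$ equals $\mathbf{T}_{p}^{*}\mathbf{T}_{p}$, where $\mathbf{T}_{p}$ is the leading principal block of $\mathbf{T}$; consequently $|\mathbf{X}_{p}|=\prod_{i=1}^{p}t_{ii}^{2}$ for $p=1,\dots,m$. Substituting into the telescoping definition (\ref{hwv}), and using (\ref{qk2}), then gives
$$
  q_{\kappa}(\mathbf{X})=\prod_{i=1}^{m}t_{ii}^{2k_{i}},\qquad q_{\kappa}(\mathbf{X}^{-1})=\prod_{i=1}^{m}t_{ii}^{-2k_{i}}.
$$
At the same time $|\mathbf{X}|=\prod_{i}t_{ii}^{2}$ and a direct computation gives $\tr(\mathbf{X})=\sum_{i\le j}|t_{ij}|^{2}$, where $|t_{ij}|^{2}=\sum_{k=1}^{\beta}(t_{ij}^{(k)})^{2}$ for $i<j$. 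Finally Lemma \ref{lemch} contributes the Jacobian $(d\mathbf{X})=2^{m}\prod_{i}t_{ii}^{\beta(m-i)+1}(d\mathbf{T})$.

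Plugging these identities into (\ref{dfR1}) and collecting exponents of $t_{ii}$, the total exponent simplifies to $2a_{i}-1$ with $a_{i}:=a+k_{i}-(i-1)\beta/2$, so that the density of $\mathbf{T}$ is a product of a function of each $t_{ii}$ alone (carrying the quadratic $t_{ii}^{2}$ from $\tr\mathbf{X}$) and a function of each $t_{ij}$, $i<j$, alone (carrying $|t_{ij}|^{2}$). After the one-dimensional substitution $u=t_{ii}^{2}$, the diagonal factor is exactly the $\mathcal{G}^{\beta}(a_{i},1)$ density, while the off-diagonal factor matches the density of $\sqrt{2}\,t_{ij}\sim\mathcal{N}_{1}^{\beta}(0,1)$ via the scaling $y=\sqrt{2}\,t_{ij}$. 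The only nontrivial check is the normalizing constant: using $\Gamma_{m}^{\beta}[a,\kappa]=\pi^{m(m-1)\beta/4}\prod_{i}\Gamma[a_{i}]$ and writing $\beta^{am+\sum_{i}k_{i}}=\prod_{i}\beta^{a_{i}}\cdot\beta^{\beta m(m-1)/4}$, the $\pi$'s combine with the $m(m-1)/2$ normal normalizations $(\beta/\pi)^{\beta/2}$, the surplus powers of $\beta$ distribute into the $m$ gamma normalizations $\beta^{a_{i}}/\Gamma[a_{i}]$, and the $2^{m}$ Jacobian factor is absorbed by the $m$ substitutions $d(t_{ii}^{2})=2t_{ii}\,dt_{ii}$.

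Part 2 requires only one change: the factor $q_{\kappa}(\mathbf{X})$ in (\ref{dfR1}) is replaced by $q_{\kappa}(\mathbf{X}^{-1})=\prod_{i}t_{ii}^{-2k_{i}}$, flipping the sign of $k_{i}$ in the diagonal exponent and turning the marginal shape into $a-k_{i}-(i-1)\beta/2$; the off-diagonal marginals are unaffected, and the parameter restriction $\re(a)>(m-1)\beta/2+k_{1}$ is exactly what is needed for every marginal shape to be positive. The step I expect to be most delicate is the identity $q_{\kappa}(\mathbf{X})=\prod_{i}t_{ii}^{2k_{i}}$: it is routine for the real, complex and quaternion cases via direct block computation, but in the octonion setting one must check that the principal minors in (\ref{hwv}) are well defined on $\mathfrak{S}_{m}^{\beta}$ and that the block-triangular reduction $|\mathbf{T}_{p}^{*}\mathbf{T}_{p}|=|\mathbf{T}_{p}|^{2}=\prod_{i\le p} t_{ii}^{2}$ remains valid under the convention used in the paper.
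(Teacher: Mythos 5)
Your proposal is correct and follows essentially the same route as the paper's proof: the Cholesky substitution via Lemma \ref{lemch}, the identities $\tr\mathbf{X}=\sum_{i\le j}t_{ij}^{2}$, $|\mathbf{X}|=\prod_{i}t_{ii}^{2}$, $q_{\kappa}(\mathbf{X})=\prod_{i}t_{ii}^{2k_{i}}$ from the principal-block structure of $\mathbf{T}^{*}\mathbf{T}$, and the factorisation of $\Gamma_{m}^{\beta}[a,\kappa]$ and the powers of $\beta$ into the individual gamma and normal normalising constants. The only difference is that you spell out the octonion caveat and the $u=t_{ii}^{2}$, $y=\sqrt{2}\,t_{ij}$ rescalings slightly more explicitly than the paper does.
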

\textbf{Proof. } This is given for the case of Riesz distribution type I. The proof for Riesz
distribution type II is the same thing.  The density of $\mathbf{X}$ is
\begin{equation}\label{eqch}
    \frac{\beta^{am+\sum_{i = 1}^{m}k_{i}}}{\Gamma_{m}^{\beta}[a,\kappa]}
    \etr\{-\beta\mathbf{X}\}|\mathbf{X}|^{a-(m-1)\beta/2 - 1}
    q_{\kappa}(\mathbf{X})(d\mathbf{X}).
\end{equation}
Since $\mathbf{X} = \mathbf{T}^{*}\mathbf{T}$ we have
\begin{eqnarray*}
  \tr \mathbf{X} &=& \tr \mathbf{T}^{*}\mathbf{T} = \displaystyle\sum_{i \leq j}^{m} t_{ij}^{2},\\
  |\mathbf{X}| &=& |\mathbf{T}^{*}\mathbf{T}| = |\mathbf{T}|^{2} = \displaystyle\prod_{i = 1}^{m} t_{ii}^{2},\\
  q_{\kappa}(\mathbf{X}) &=& q_{\kappa}(\mathbf{T}^{*}\mathbf{T}) =
  |\mathbf{T}^{*}\mathbf{T}|^{k_{m}} \displaystyle\prod_{i =
  1}^{m-1}|\mathbf{T}_{i}^{*}\mathbf{T}_{i}|^{k_{i}-k_{i+1}} =  \displaystyle\prod_{i = 1}^{m}
  t_{ii}^{2k_{i}},
\end{eqnarray*}
and by Theorem \ref{lemch} noting that $dt_{ii}^{2} = 2 t_{ii}dt_{ii}$, then
\begin{eqnarray*}
  (d\mathbf{X}) &=& \displaystyle 2^{m}\prod_{i = 1}^{m} t_{ii}^{\beta(m-i)+1}\left(\bigwedge_{i\leq
  j}dt_{ij}\right), \\
   &=& \displaystyle \prod_{i = 1}^{m} \left(t_{ii}^{2}\right)^{\beta(m-i)/2}\left(\bigwedge_{i
   = 1}dt_{ii}^{2}\right)\wedge\left(\bigwedge_{i < j}dt_{ij}\right).
\end{eqnarray*}
Substituting this expression in (\ref{eqch}) we find that the joint density of the $t_{ij} \ (1\leq i\leq
j\leq m)$ can be written as
$$
  \prod_{i=1}^{m} \frac{\beta^{a+k_{i}-(i-1)\beta/2}}{\Gamma[a+k_{i}-(i-1)\beta/2]}
  \exp\{-\beta t_{ii}^{2}\} \left(t_{ii}^{2}\right)^{a+k_{i}-(i-1)\beta/2-1}\left(dt_{ii}^{2}\right)
$$
$$
  \times \ \prod_{i <j}^{m}\frac{1}{(\pi/\beta)^{\beta/2}} \exp\{- \beta t_{ij}^{2}\}(dt_{ij}),
$$
only observe that
\begin{eqnarray*}
  \frac{\beta^{am+\sum_{i = 1}^{m}k_{i}}}{\Gamma_{m}^{\beta}[a,\kappa]} &=&
  \frac{\beta^{am+\sum_{i = 1}^{m}k_{i}-m(m-1)\beta/4}}{\beta^{-m(m-1)\beta/4}
  \pi^{m(m-1)\beta/4} \prod_{i=1}^{m}\Gamma[a+k_{i}-(i-1)\beta/2]} \\
   &=& \prod_{i=1}^{m}\frac{\beta^{a+k_{i}-(i-1)\beta/2}}{\Gamma[a+k_{i}-(i-1)\beta/2]}
       \prod_{i<j}^{m}\frac{1}{(\pi /\beta)^{\beta/2}}.
\end{eqnarray*}
\fin In analogy to generalised variance for Wishart case, the following result gives the distribution of
$|\mathbf{X}|$ when $\mathbf{X}$ has a Riesz distribution type I or type II.

\begin{thm}
Let $v = |\mathbf{X}|/|\mathbf{\Sigma}|$. Then
\begin{enumerate}
  \item if $\mathbf{X}$ has a Riesz distribution of type I, (\ref{dfR1}), the density of $v$ is
  $$
    \prod_{i=1}^{m}\mathcal{G}^{\beta}\left(t_{ii}^{2}:a+k_{i}-(i-1)\beta/2,1\right).
  $$
  \item if $\mathbf{X}$ has a Riesz distribution of type II, (\ref{dfR2}), the density of $v$ is
  $$
  \prod_{i=1}^{m}\mathcal{G}^{\beta}\left(t_{ii}^{2}:a-k_{i}-(i-1)\beta/2,1\right).
  $$
\end{enumerate}
where $t_{ii}^{2}$, $i = 1, \dots, m$, are independent random variables.
\end{thm}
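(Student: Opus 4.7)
The plan is to reduce to the standardized case $\mathbf{\Sigma} = \mathbf{I}_m$ and then read off the distribution of $|\mathbf{X}|$ directly from the Bartlett-type decomposition given in Theorem \ref{teodB}.

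First I would factor $\mathbf{\Sigma} = \mathbf{T}_0 \mathbf{T}_0^{*}$ with $\mathbf{T}_0 \in \mathfrak{T}_{U}^{\beta}(m)$ having positive diagonal (possible since $\mathbf{\Sigma}\in \mathfrak{P}_{m}^{\beta}$), and set $\mathbf{Y} = \mathbf{T}_0^{-1}\mathbf{X}\mathbf{T}_0^{-*}$, so that $\mathbf{X} = \mathbf{T}_0\mathbf{Y}\mathbf{T}_0^{*}$. Lemma \ref{lemhlt} gives the Jacobian $(d\mathbf{X}) = |\mathbf{\Sigma}|^{(m-1)\beta/2+1}(d\mathbf{Y})$, and elementary identities yield $|\mathbf{X}| = |\mathbf{\Sigma}||\mathbf{Y}|$ together with $\tr(\mathbf{\Sigma}^{-1}\mathbf{X}) = \tr(\mathbf{Y})$. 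The only non-trivial piece is the transformation of the generalised power: choosing $\mathbf{B} = \mathbf{T}_0$ so that $\mathbf{B}^{*}\mathbf{B} = \mathbf{\Sigma}$, property (\ref{qk5}) supplies $q_{\kappa}(\mathbf{X}) = q_{\kappa}(\mathbf{\Sigma})\,q_{\kappa}(\mathbf{Y})$, and property (\ref{qk6}) (or (\ref{qk2}) combined with (\ref{qk5}) applied to $\mathbf{X}^{-1}$) gives the analogous identity $q_{\kappa}(\mathbf{X}^{-1}) = q_{\kappa}(\mathbf{\Sigma})^{-1}\,q_{\kappa}(\mathbf{Y}^{-1})$. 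Substituting into (\ref{dfR1}) or (\ref{dfR2}), every $\mathbf{\Sigma}$-dependent factor cancels, showing that $\mathbf{Y}$ has a Riesz distribution of the corresponding type with scale parameter $\mathbf{I}_m$.

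Because $v = |\mathbf{X}|/|\mathbf{\Sigma}| = |\mathbf{Y}|$, it now suffices to identify the law of $|\mathbf{Y}|$ under the standardized Riesz distribution. Applying the Cholesky decomposition $\mathbf{Y} = \mathbf{T}^{*}\mathbf{T}$ with $\mathbf{T} \in \mathfrak{T}_{U}^{\beta}(m)$ and invoking Theorem \ref{teodB}, the diagonal squares $t_{ii}^{2}$ are mutually independent and distributed as $\mathcal{G}^{\beta}(a+k_{i}-(i-1)\beta/2,1)$ in the type I case and as $\mathcal{G}^{\beta}(a-k_{i}-(i-1)\beta/2,1)$ in the type II case; the off-diagonal entries $t_{ij}$ play no role because $|\mathbf{Y}| = |\mathbf{T}|^{2} = \prod_{i=1}^{m} t_{ii}^{2}$. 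Hence $v$ is distributed as a product of $m$ independent gamma variates with the stated shape parameters, yielding the joint density written in the statement.

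The main obstacle is the first step: verifying that the generalised power $q_\kappa$ behaves multiplicatively under the one-sided triangular congruence $\mathbf{Y}\mapsto \mathbf{T}_0\mathbf{Y}\mathbf{T}_0^{*}$, since $\mathbf{T}_0$ is not Hermitian. This is not automatic from the determinantal nature of $q_\kappa$, but property (\ref{qk5}) was precisely stated for any $\mathbf{B}\in \mathfrak{A}^{m\times m}$ with $\mathbf{B}^{*}\mathbf{B}\in\mathfrak{S}_{m}^{\beta}$, which is exactly the configuration here. Everything else amounts to bookkeeping and an invocation of a result already proved in the excerpt.
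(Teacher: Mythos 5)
Your proof takes the same route as the paper's: standardize $\mathbf{X}$ by a congruence with a square-root factor of $\mathbf{\Sigma}$ so that $v=|\mathbf{Y}|=\prod_{i=1}^{m}t_{ii}^{2}$, then read the law of the diagonal Cholesky entries off Theorem \ref{teodB}; the paper does exactly this with $\mathbf{B}=\mathbf{\Sigma}^{-1/2}\mathbf{X}\mathbf{\Sigma}^{-1/2}$ and leaves the standardization implicit, whereas you spell it out. One detail to repair: with your choice $\mathbf{\Sigma}=\mathbf{T}_{0}\mathbf{T}_{0}^{*}$, $\mathbf{T}_{0}$ upper triangular, property (\ref{qk5}) as written gives $q_{\kappa}(\mathbf{T}_{0}\mathbf{Y}\mathbf{T}_{0}^{*})=q_{\kappa}(\mathbf{T}_{0}^{*}\mathbf{T}_{0})\,q_{\kappa}(\mathbf{Y})$, and $q_{\kappa}(\mathbf{T}_{0}^{*}\mathbf{T}_{0})\neq q_{\kappa}(\mathbf{\Sigma})$ in general because $q_{\kappa}$ is built from \emph{leading} principal minors and is only covariant under the lower-triangular group; take instead $\mathbf{\Sigma}=\mathbf{L}\mathbf{L}^{*}$ with $\mathbf{L}$ lower triangular, for which $(\mathbf{L}\mathbf{Y}\mathbf{L}^{*})_{i}=\mathbf{L}_{i}\mathbf{Y}_{i}\mathbf{L}_{i}^{*}$ and hence $q_{\kappa}(\mathbf{L}\mathbf{Y}\mathbf{L}^{*})=q_{\kappa}(\mathbf{\Sigma})\,q_{\kappa}(\mathbf{Y})$ exactly, after which $\mathbf{Y}$ is identity-scale Riesz and the rest of your argument goes through unchanged (you are in any case already more careful here than the paper, whose symmetric square root does not make $q_{\kappa}$ multiplicative at all).
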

\textbf{Proof. } This is immediately from Theorem \ref{teodB}, noting that if
$$
  \mathbf{B} =\mathbf{\Sigma}^{-1/2}\mathbf{X\Sigma}^{-1/2} = \mathbf{T}^{*}\mathbf{T},
$$
with $\mathbf{T}\in \mathfrak{T}_{U}^{\beta}(m)$ and $t_{ii} > 0$, $i = 1, 2, \ldots , m$, then
$$
  |\mathbf{B}| = \prod_{i=1}^{m}t_{ii}^{2} = |\mathbf{X}|/|\mathbf{\Sigma}| = v.
$$
\fin

\begin{thm}\label{teoE} Let $\mathbf{\Sigma}= \mathbf{I}_{m}$ and $\kappa =
(k_{1}, k_{2}, \dots, k_{m})$, $k_{1}\geq k_{2}\geq \cdots \geq k_{m} \geq 0$, $k_{1}, k_{2},\dots,
k_{m}$ are nonnegative integers.
\begin{enumerate}
  \item Let $\lambda_{1}, \dots,\lambda_{m}$,  $\lambda_{1}> \cdots >\lambda_{m} > 0$ be
  the eigenvalues of $\mathbf{X}$. Then if $\mathbf{X}$ has a Riesz distribution of type I,
  the joint density  of $\lambda_{1}, \dots, \lambda_{m}$ is
  $$
     \frac{\beta^{am+\sum_{i = 1}^{m}k_{i}} \ \pi^{m^{2}\beta/2+ \varrho}}{\Gamma_{m}^{\beta}[m\beta/2]\Gamma_{m}^{\beta}[a,\kappa]
     } \prod_{i< j}^{m}(\lambda_{i} -\lambda_{j})^{\beta}
     \exp\left\{-\beta \sum_{i =1}^{m}\lambda_{i}\right\}
     \hspace{4cm}
  $$
  \begin{equation}\label{dER1}\hspace{4cm}
     \times \ \prod_{i=1}^{m}\lambda_{i}^{a-(m-1)\beta/2 - 1} \frac{C_{\kappa}^{\beta}(\mathbf{L})}{C_{\kappa}^{\beta}(\mathbf{I}_{m})}.
  \end{equation}
  where $\mathbf{L} = \diag(\lambda_{1}, \dots,\lambda_{m})$ and $\re(a) \geq (m-1)\beta/2 - k_{m}$.
  \item Let $\delta_{1}, \dots,\delta_{m}$,  $\delta_{1}> \cdots >\delta_{m} > 0$ be
  the eigenvalues of $\mathbf{X}$. Then if $\mathbf{X}$ has a Riesz distribution of type II,
  the joint density of their eigenvalues is
  $$
     \frac{\beta^{am-\sum_{i = 1}^{m}k_{i}} \ \pi^{m^{2}\beta/2+ \varrho}}
     {\Gamma_{m}^{\beta}[m\beta/2]\Gamma_{m}^{\beta}[a,\kappa]}
     \prod_{i< j}^{m}(\delta_{i} -\delta_{j})^{\beta}
     \exp\left\{-\beta \sum_{i =1}^{m}\delta_{i}\right\}
     \hspace{4cm}
  $$
  \begin{equation}\label{dER2}\hspace{4cm}
     \times \ \prod_{i=1}^{m}\delta_{i}^{a-(m-1)\beta/2 - 1} \frac{C_{\kappa}^{\beta}(\mathbf{D}^{-1})}{C_{\kappa}^{\beta}(\mathbf{I}_{m})}).
  \end{equation}
  where $\mathbf{D} = \diag(\delta_{1}, \dots,\delta_{m})$, $\re(a) > (m-1)\beta/2 + k_{1}$.
\end{enumerate}
Where $\varrho$ is defined in Lemma 3 and $C_{\kappa}^{\beta}(\cdot)$ denotes the zonal spherical
functions or spherical polynomials, see \citet{gr:87} and \citet[Chapter XI, Section 3]{fk:94}.
\end{thm}
\textbf{Proof. } 1. From Lemma 3
$$
    \frac{\beta^{am+\sum_{i = 1}^{m}k_{i}} \ \pi^{m^{2}\beta/2+ \varrho}}{\Gamma_{m}^{\beta}[m\beta/2]\Gamma_{m}^{\beta}[a,\kappa]
    |\mathbf{\Sigma}|^{a}q_{\kappa}(\mathbf{\Sigma})} \prod_{i<
    j}^{m}(\lambda_{i} -\lambda_{j})^{\beta} \hspace{4cm}
$$
$$\hspace{2cm}
    \int_{\mathbf{H} \in \mathfrak{U}^{\beta}(m)}\etr\{-\beta \mathbf{HLH}^{*}\}
    |\mathbf{HLH}^{*}|^{a-(m-1)\beta/2 - 1} q_{\kappa}(\mathbf{HLH}^{*})(d\mathbf{H}).
$$
Therefore,
$$
    \frac{\beta^{am+\sum_{i = 1}^{m}k_{i}} \ \pi^{m^{2}\beta/2+ \varrho}}{\Gamma_{m}^{\beta}[m\beta/2]\Gamma_{m}^{\beta}[a,\kappa]
    |\mathbf{\Sigma}|^{a}q_{\kappa}(\mathbf{\Sigma})} \prod_{i<
    j}^{m}(\lambda_{i} -\lambda_{j})^{\beta} \prod_{i=1}^{m}\lambda_{i}^{a-(m-1)\beta/2 - 1}
    \exp\left\{-\beta \sum_{i =1}^{m}\lambda_{i}\right\}
    \hspace{2cm}
$$
$$\hspace{5.5cm}
    \int_{\mathbf{H} \in \mathfrak{U}^{\beta}(m)}
     q_{\kappa}(\mathbf{HLH}^{*})(d\mathbf{H}),
$$
the result is follow from \citep[Equation 4.8(2) and Definition 5.3]{gr:87} and \citet[Chapter XI,
Section 3]{fk:94}.

2. Is proved similarly.\fin

\section{Generalised beta distributions: Beta-Riesz distributions.}\label{sec4}

This section defines several versions for the beta functions and their relation with the gamma functions
type I and II. In these terms, the beta-Riesz distributions type I and II are defined. Finally, diverse
properties are studied.

\subsection{Generalised $c$-beta function}
A generalised of \emph{multivariate beta function} for the cone $\mathfrak{P}^{\beta}_{m}$, denoted as
$\mathcal{B}_{m}^{\beta}[a,\kappa;b, \tau]$, can be defined as
\begin{equation}\label{gcbeta1}
    \int_{\mathbf{0}<\mathbf{S}<\mathbf{I}_{m}}
    |\mathbf{S}|^{a-(m-1)\beta/2-1} q_{\kappa}(\mathbf{S})|\mathbf{I}_{m} - \mathbf{S}|^{b-(m-1)\beta/2-1}
    q_{\tau}(\mathbf{I}_{m} - \mathbf{S})(d\mathbf{S})
\end{equation}
where $\kappa = (k_{1}, k_{2}, \dots, k_{m}) \in \Re^{m}$, $\tau = (t_{1}, t_{2}, \dots, t_{m}) \in
\Re^{m}$, Re$(a) > (m-1)\beta/2-k_{m}$ and Re$(b)> (m-1)\beta/2-t_{m}$. This is defined by \citet[p.
130]{fk:94} for Euclidean simple Jordan algebras. In the context of multivariate analysis, this
generalised beta function can be termed \textit{generalised $c$-beta function type I}, as analogy to the
correspondence case of matrix multivariate beta distribution, and using the term $c$-beta as abbreviation
of classical-beta. In the next theorem we introduce the \textit{generalised $c$-beta function type II}
and its relation with the generalised gamma function.

\begin{thm}\label{teo2}
The generalised $c$-beta function type I can be expressed as
$$
  \int_{\mathbf{R} \in\mathfrak{P}_{m}^{\beta}} |\mathbf{R}|^{a-(m-1)\beta/2-1}
    q_{\kappa}(\mathbf{R})|\mathbf{I}_{m} + \mathbf{R}|^{-(a+b)} q_{-(\kappa+\tau)}(\mathbf{I}_{m} + \mathbf{R})
    (d\mathbf{R})
$$
$$
   \hspace{7cm} = \frac{\Gamma_{m}^{\beta}[a,\kappa] \Gamma_{m}^{\beta}[b,\tau]}{\Gamma_{m}^{\beta}[a+b,
    \kappa+\tau]},
$$
where $\kappa = (k_{1}, k_{2}, \dots, k_{m}) \in \Re^{m}$, $\tau = (t_{1}, t_{2}, \dots, t_{m}) \in
\Re^{m}$, Re$(a)> (m-1)\beta/2-k_{m}$ and Re$(b)> (m-1)\beta/2-t_{m}$. The integral expression is termed
\emph{generalised $c$-beta function type II}.
\end{thm}
\textbf{Proof. } Let $\u(\mathbf{I}_{m} - \mathbf{S})^{*} \u(\mathbf{I}_{m} - \mathbf{S})=
(\mathbf{I}_{m} - \mathbf{S})$ the Cholesky decomposition of $(\mathbf{I}_{m} - \mathbf{S})$ where
$\u(\mathbf{I}_{m} - \mathbf{S}) \in \mathfrak{T}_{U}^{\beta}(m)$ and define $\mathbf{R} =
\u(\mathbf{I}_{m} - \mathbf{S})^{*-1} \mathbf{S} \u(\mathbf{I}_{m} - \mathbf{S})^{-1}$ then
\begin{eqnarray*}
  \mathbf{R} &=& \u(\mathbf{I}_{m} - \mathbf{S})^{*-1} (\mathbf{I}_{m}-(\mathbf{I}_{m}-\mathbf{S})) \u(\mathbf{I}_{m} -
\mathbf{S})^{-1} \\
   &=& \u(\mathbf{I}_{m} - \mathbf{S})^{*-1}\u(\mathbf{I}_{m} - \mathbf{S})^{-1} - \mathbf{I}_{m}
\end{eqnarray*}
Thus $(\mathbf{I}_{m} + \mathbf{R}) = \u(\mathbf{I}_{m} - \mathbf{S})^{*-1}\u(\mathbf{I}_{m} -
\mathbf{S})^{-1}$. By Lemma \ref{lemi}
\begin{eqnarray*}
  (d\mathbf{R}) &=& |\u(\mathbf{I}_{m} - \mathbf{S})^{*-1}\u(\mathbf{I}_{m} -
    \mathbf{S})^{-1}|^{(m-1)\beta+2} (d\mathbf{S}) \\
   &=& |\mathbf{I}_{m} + \mathbf{R}|^{(m-1)\beta+2} (d\mathbf{S}),
\end{eqnarray*}
therefore $(d\mathbf{S}) = (\mathbf{I}+\mathbf{R})^{-(m-1)\beta-2} (d\mathbf{R})$. Now remember that
$q_{\kappa}(\mathbf{T}^{*-1}\mathbf{A}\mathbf{T}^{-1}) = q_{\kappa}(\mathbf{A}) q_{-\kappa}(\mathbf{B}) =
q_{\kappa}(\mathbf{A}) q_{\kappa}^{-1}(\mathbf{B})$ for $\mathbf{B} = \mathbf{T}^{*}\mathbf{T}$, we have
$$
  |\mathbf{R}|^{a-(m-1)\beta/2-1}q_{\kappa}(\mathbf{R}) = |\mathbf{I}_{m} - \mathbf{S}|^{-a+(m-1)\beta/2+1}
  |\mathbf{S}|^{a-(m-1)\beta/2-1}q_{\kappa}(\mathbf{S}) q_{-\kappa}(\mathbf{I}_{m} - \mathbf{S})
$$
and
$$
  |\mathbf{I}_{m} + \mathbf{R}|^{b-(m-1)\beta/2-1}q_{\tau}(\mathbf{I} + \mathbf{R}) = |\mathbf{I}_{m} -
  \mathbf{S}|^{-b+(m-1)\beta/2+1} q_{-\kappa}(\mathbf{I}_{m} - \mathbf{S}),
$$
then
$$
  |\mathbf{I}_{m} + \mathbf{R}|^{-b+(m-1)\beta/2+1}q_{-\tau}(\mathbf{I} + \mathbf{R}) = |\mathbf{I}_{m} -
  \mathbf{S}|^{b-(m-1)\beta/2-1} q_{\kappa}(\mathbf{I}_{m} - \mathbf{S}).
$$
from where the desired result is obtained.

For the expression in terms of generalised gamma function, let $\mathbf{B} =
\u(\mathbf{\Xi})^{*}\mathbf{S}\u(\mathbf{\Xi})$ in (\ref{gcbeta1}), such that $\mathbf{\Xi} =
\u(\mathbf{\Xi})^{*}\u(\mathbf{\Xi})$. Then $(d\mathbf{S}) = |\mathbf{\Xi}|^{-(m - 1)\beta/2 - 1}
(d\mathbf{B})$, and
$$
  \mathcal{B}_{m}^{\beta}[a,\kappa;b, \tau]  |\mathbf{\Xi}|^{a+b-(m - 1)\beta/2 - 1} q_{\kappa +\tau}(\mathbf{\Xi})\hspace{6cm}
$$
$$\hspace{2cm}
  = \int_{\mathbf{0}}^{\mathbf{\Xi}} |\mathbf{B}|^{a-(m-1)\beta/2 -1} q_{\kappa}(\mathbf{B})
  |\mathbf{\Xi} -\mathbf{B}|^{b-(m-1)\beta/2 -1} q_{\tau}(\mathbf{\Xi}-\mathbf{B})
  (d\mathbf{B}).
$$
Taking Laplace transform of both size, by (\ref{dfR1}), the left size is
$$
  \int_{\mathbf{\Xi} \in \mathfrak{P}_{m}^{\beta}} \mathcal{B}_{m}^{\beta}[a,\kappa;b, \tau]
  \etr\{-\mathbf{\Xi Z}\}|\mathbf{\Xi}|^{a+b-(m - 1)\beta/2 - 1} q_{\kappa +\tau}(\mathbf{\Xi})(d\mathbf{\Xi})
  \hspace{3cm}
$$
$$\hspace{4cm}
  = \mathcal{B}_{m}^{\beta}[a,\kappa;b, \tau] \Gamma_{m}^{\beta}[a+b; \kappa +\tau]|\mathbf{Z}|^{-(a+b)}
  q_{\kappa + \tau}(\mathbf{Z}^{-1}),
$$
and applying Lemma \ref{lemC}, $ g_{1}(\mathbf{Z})$ is
$$
   \int_{\mathbf{\Xi} \in \mathfrak{P}_{m}^{\beta}}
  \etr\{-\mathbf{\Xi Z}\}|\mathbf{\Xi}|^{a-(m - 1)\beta/2 - 1} q_{\kappa}(\mathbf{\Xi})(d\mathbf{\Xi})
  =  \Gamma_{m}^{\beta}[a; \kappa]|\mathbf{Z}|^{-a} q_{\kappa}(\mathbf{Z}^{-1}),
$$
and $ g_{2}(\mathbf{Z})$ is given by
$$
  \int_{\mathbf{B} \in \mathfrak{P}_{m}^{\beta}}
  \etr\{-\mathbf{BZ}\}|\mathbf{B}|^{b-(m - 1)\beta/2 - 1} q_{\kappa}(\mathbf{B})(d\mathbf{B})
  =  \Gamma_{m}^{\beta}[b; \tau]|\mathbf{Z}|^{-b} q_{\tau}(\mathbf{Z}^{-1}).
$$
Thus, equally
$$
  \mathcal{B}_{m}^{\beta}[a,\kappa;b, \tau] = \frac{\Gamma_{m}^{\beta}[a,\kappa] \Gamma_{m}^{\beta}[b,\tau]}
  {\Gamma_{m}^{\beta}[a+b, \kappa+\tau]}.
$$
\fin

\subsection{Generalised $k$-beta function}

Alternatively, a generalised of \emph{multivariate beta function} for the cone
$\mathfrak{P}^{\beta}_{m}$, can be defined and denoted as
\begin{equation}\label{gkbeta1}
    \mathcal{B}_{m}^{\beta}[a,-\kappa;b, -\tau] =\int_{\mathbf{0}<\mathbf{S}<\mathbf{I}_{m}}
    |\mathbf{S}|^{a-(m-1)\beta/2-1} q_{\kappa}\left(\mathbf{S}^{-1}\right)|\mathbf{I}_{m} - \mathbf{S}|^{b-(m-1)\beta/2-1}
    q_{\tau}\left((\mathbf{I}_{m} - \mathbf{S})^{-1}\right)(d\mathbf{S})
\end{equation}
where $\kappa = (k_{1}, k_{2}, \dots, k_{m}) \in \Re^{m}$, $\tau = (t_{1}, t_{2}, \dots, t_{m}) \in
\Re^{m}$, Re$(a)> (m-1)\beta/2+k_{1}$ and Re$(b)> (m-1)\beta/2+t_{1}$. Again, in the context of
multivariate analysis, this generalised $k$-beta function can be termed \textit{generalised $k$-beta
function type I}, as an analogy to the corresponding case of matrix multivariate beta distribution and
using the term $k$-beta as abbreviation of Khatri-beta. Next theorem introduces the \textit{generalised
$k$-beta function type II} and its relation with the generalised gamma function proposed by \citet{k:66}.

\begin{thm}\label{teo3}
The generalised $k$-beta function type II can be expressed as
$$
  \int_{\mathbf{R} \in\mathfrak{P}_{m}^{\beta}} |\mathbf{R}|^{a-(m-1)\beta/2-1}
    q_{\kappa}(\mathbf{R}^{-1})|\mathbf{I}_{m} + \mathbf{R}|^{-(a+b)} q_{-(\kappa+\tau)}
    \left((\mathbf{I}_{m} + \mathbf{R})^{-1}\right)(d\mathbf{R})
$$
$$
   \hspace{7cm} = \frac{\Gamma_{m}^{\beta}[a,-\kappa] \Gamma_{m}^{\beta}[b,-\tau]}{\Gamma_{m}^{\beta}[a+b,
    -\kappa-\tau]},
$$
where $\kappa = (k_{1}, k_{2}, \dots, k_{m}) \in \Re^{m}$, $\tau = (t_{1}, t_{2}, \dots, t_{m}) \in
\Re^{m}$, Re$(a) > (m-1)\beta/2+k_{1}$ and Re$(b)> (m-1)\beta/2+t_{1}$. The integral expression is termed
\emph{generalised $k$-beta function type II}.
\end{thm}
\textbf{Proof. } The proof is analogous to the given for Theorem \ref{teo2}. \fin

Observe that if $\kappa = (0, \dots, 0) \in \Re^{m}$ and $\tau = (0, \dots, 0) \in \Re^{m}$ in
(\ref{gcbeta1}), Theorem \ref{teo2}, (\ref{gkbeta1}) and Theorem \ref{teo3} the classical beta function
is obtained, see \citet{h:55}.

\subsection{$c$-beta-Riesz and $k$-beta-Riesz distributions}

As an immediate consequence of the results of the previous section, next the $c$-beta-Riesz and
$k$-beta-Riesz distributions types I and II are defined.

\begin{defn}\label{defncBRd}
Let $\kappa = (k_{1}, k_{2}, \dots, k_{m}) \in \Re^{m}$ and $\tau = (t_{1}, t_{2}, \dots, t_{m}) \in
\Re^{m}$.
\begin{enumerate}
  \item Then it said that $\mathbf{S}$ has a \emph{$c$-beta-Riesz distribution of type I} if its density function is
  \begin{equation}\label{dfcbR1}
    \frac{1}{\mathcal{B}_{m}^{\beta}[a,\kappa;b, \tau]}
    |\mathbf{S}|^{a-(m-1)\beta/2-1} q_{\kappa}(\mathbf{S})|\mathbf{I}_{m} - \mathbf{S}|^{b-(m-1)\beta/2-1}
    q_{\tau}(\mathbf{I}_{m} - \mathbf{S})(d\mathbf{S}),
  \end{equation}
  where $\mathbf{0}<\mathbf{S}<\mathbf{I}_{m}$ and $Re(a) > (m-1)\beta/2-k_{m}$ and Re$(b)> (m-1)\beta/2-t_{m}$.
  \item Then it said that $\mathbf{R}$ has a \emph{$c$-beta-Riesz distribution of type II} if its density function is
  \begin{equation}\label{dfcbR2}
     \frac{1}{\mathcal{B}_{m}^{\beta}[a,\kappa;b, \tau]} |\mathbf{R}|^{a-(m-1)\beta/2-1}
     q_{\kappa}(\mathbf{R})|\mathbf{I}_{m} + \mathbf{R}|^{-(a+b)} q_{-(\kappa+\tau)}(\mathbf{I}_{m} + \mathbf{R})
     (d\mathbf{R}),
  \end{equation}
  where $\mathbf{R} \in\mathfrak{P}_{m}^{\beta}$ and Re$(a) > (m-1)\beta/2-k_{m}$ and Re$(b)> (m-1)\beta/2-t_{m}$.
\end{enumerate}
\end{defn}
Similarly we have
\begin{defn}\label{defnkBRd}
Let $\kappa = (k_{1}, k_{2}, \dots, k_{m}) \in \Re^{m}$ and $\tau = (t_{1}, t_{2}, \dots, t_{m}) \in
\Re^{m}$.
\begin{enumerate}
  \item Then it said that $\mathbf{S}$ has a \emph{$k$-beta-Riesz distribution of type I} if its density function is
  \begin{equation}\label{dfkbR1}
    \frac{1}{\mathcal{B}_{m}^{\beta}[a,-\kappa;b, -\tau]}
    |\mathbf{S}|^{a-(m-1)\beta/2-1} q_{\kappa}(\mathbf{S}^{-1})|\mathbf{I}_{m} - \mathbf{S}|^{b-(m-1)\beta/2-1}
    q_{\tau}\left((\mathbf{I}_{m} - \mathbf{S})^{-1}\right)(d\mathbf{S}),
  \end{equation}
  where $\mathbf{0}<\mathbf{S}<\mathbf{I}_{m}$ and  Re$(a) > (m-1)\beta/2+k_{1}$ and Re$(b)> (m-1)\beta/2+t_{1}$.
  \item Then it said that $\mathbf{R}$ has a \emph{$k$-beta-Riesz distribution of type II} if its density function is
  \begin{equation}\label{dfkbR2}
     \frac{1}{\mathcal{B}_{m}^{\beta}[a,-\kappa;b, -\tau]} |\mathbf{R}|^{a-(m-1)\beta/2-1}
     q_{\kappa}(\mathbf{R}^{-1})|\mathbf{I}_{m} + \mathbf{R}|^{-(a+b)} q_{-(\kappa+\tau)}\left((\mathbf{I}_{m} +
     \mathbf{R})^{-1}\right)
     (d\mathbf{R}),
  \end{equation}
  where $\mathbf{R} \in\mathfrak{P}_{m}^{\beta}$ and  Re$(a) > (m-1)\beta/2+k_{1}$ and Re$(b)> (m-1)\beta/2+t_{1}$.
\end{enumerate}
\end{defn}

Observe that the relationship between the densities (\ref{dfcbR1}) and (\ref{dfcbR2}), and between the
densities (\ref{dfkbR1}) and (\ref{dfkbR2}) are easily obtained from the theorems \ref{teo2} and
\ref{teo3}, respectively.

The following result state the relation between the Riesz and beta-Riesz distributions.

\begin{thm}\label{teo4}\label{teobb1}
Let $\mathbf{X}_{1}$ and $\mathbf{X}_{2}$ be independently distributed as Riesz distribution type I, such
that $\mathbf{X}_{1} \sim \mathfrak{R}^{\beta, I}_{m}(a,\kappa, \mathbf{\Sigma})$ and $\mathbf{X}_{2}
\sim \mathfrak{R}^{\beta, I}_{m}(b,\tau, \mathbf{\Sigma})$, Re$(a) > (m-1)\beta/2+k_{1}$ and Re$(b)>
(m-1)\beta/2+t_{1}$. Let
$$
  \mathbf{S} =\u(\mathbf{X}_{1}+\mathbf{X}_{2})^{*-1}\mathbf{X}_{1}\u(\mathbf{X}_{1}+
  \mathbf{X}_{2})^{-1},
$$
where $\u(\mathbf{X}_{1}+\mathbf{X}_{2}) \in \mathfrak{T}_{U}^{\beta}(m)$ is such that
$(\mathbf{X}_{1}+\mathbf{X}_{2}) = \u(\mathbf{X}_{1}+\mathbf{X}_{2})^{*}
\u(\mathbf{X}_{1}+\mathbf{X}_{2})$. Then $\mathbf{S}$ and $(\mathbf{X}_{1}+\mathbf{X}_{2})$ are
independent, $\mathbf{S}$ has a $c$-beta-Riesz distribution type I and $(\mathbf{X}_{1}+\mathbf{X}_{2})
\sim \mathfrak{R}^{\beta, I}_{m}(a+b,\kappa+\tau, \mathbf{I}_{m})$.
\end{thm}
\textbf{Proof. } The joint density of $\mathbf{X}_{1}$ and $\mathbf{X}_{2}$ is given by
$$
  \frac{\beta^{(a+b)m+\sum_{i = 1}^{m}(k_{i}+t_{i})}}{\Gamma_{m}^{\beta}[a,\kappa] \Gamma_{m}^{\beta}[b,\tau]
  |\mathbf{\Sigma}|^{a+b}q_{\kappa+\tau}(\mathbf{\Sigma})}
    \etr\{-\beta\mathbf{\Sigma}^{-1}(\mathbf{X}_{1}+\mathbf{X}_{2})\}|\mathbf{X}_{1}|^{a-(m-1)\beta/2 - 1}
    q_{\kappa}(\mathbf{X}_{1})
$$
$$
  \times
    |\mathbf{X}_{2}|^{b-(m-1)\beta/2 - 1}
    q_{\tau}(\mathbf{X}_{2})(d\mathbf{X}_{1})\wedge(d\mathbf{X}_{2}).
$$
Let $\mathbf{Y} = \mathbf{X}_{1}+\mathbf{X}_{2}$ and $\mathbf{Z} = \mathbf{X}_{1}$, then,
$(d\mathbf{X}_{1})\wedge(d\mathbf{X}_{2}) = (d\mathbf{Y})\wedge(d\mathbf{Z})$. Then the joint density of
$\mathbf{Y}$ and $\mathbf{Z}$ is given by
$$
  \frac{\beta^{(a+b)m+\sum_{i = 1}^{m}(k_{i}+t_{i})}}{\Gamma_{m}^{\beta}[a,\kappa] \Gamma_{m}^{\beta}[b,\tau]
  |\mathbf{\Sigma}|^{a+b}q_{\kappa+\tau}(\mathbf{\Sigma})}
    \etr\{-\beta \mathbf{\Sigma}^{-1}\mathbf{Y}\}|\mathbf{Z}|^{a-(m-1)\beta/2 - 1}
    q_{\kappa}(\mathbf{Z})
$$
$$
  \times
    |\mathbf{Y}-\mathbf{Z}|^{b-(m-1)\beta/2 - 1}
    q_{\tau}(\mathbf{Y}-\mathbf{Z})(d\mathbf{Y})\wedge(d\mathbf{Z}).
$$
Let $\mathbf{W} = \u(\mathbf{Y})^{*}\u(\mathbf{Y})$, with $\u(\mathbf{Y}) \in
\mathfrak{T}_{U}^{\beta}(m)$ and $\mathbf{Z} = \u(\mathbf{Y})^{*} \mathbf{S} \u(\mathbf{Y})$. Observing
that $\u(\mathbf{Y})$  is a function  of $\mathbf{W}$
$$
  (d\mathbf{Y})\wedge(d\mathbf{Z}) = |\u(\mathbf{Y})^{*} \u(\mathbf{Y})|^{\beta(m-1)/2+1}
  (d\u(\mathbf{Y})^{*} \u(\mathbf{Y}))\wedge(d\mathbf{S})
$$
Hence the joint density of $\mathbf{S}$ and $\mathbf{W} = \u(\mathbf{Y})^{*}\u(\mathbf{Y})$ is
$$
  \frac{\beta^{(a+b)m+\sum_{i = 1}^{m}(k_{i}+t_{i})}}{\Gamma_{m}^{\beta}[a+b,\kappa+\tau]
  |\mathbf{\Sigma}|^{a+b}q_{\kappa+\tau}(\mathbf{\Sigma})}
    \etr\{-\beta \mathbf{\Sigma}^{-1}\u(\mathbf{Y})^{*}\u(\mathbf{Y})\} |\u(\mathbf{Y})^{*}\u(\mathbf{Y})|^{a+b-\beta(m-1)/2-1}
$$
$$
   \hspace{8cm} q_{\kappa+\tau}(\u(\mathbf{Y})^{*}\u(\mathbf{Y})) (d\u(\mathbf{Y})^{*}\u(\mathbf{Y}))
$$
$$
  \times
  \frac{\Gamma_{m}^{\beta}[a+b,\kappa+\tau]}{\Gamma_{m}^{\beta}[a,\kappa] \Gamma_{m}^{\beta}[b,\tau]}
    |\mathbf{S}|^{a-(m-1)\beta/2 - 1} q_{\kappa}(\mathbf{S}) |\mathbf{I}- \mathbf{S}|^{b-(m-1)\beta/2 - 1}
    q_{\tau}(\mathbf{I}- \mathbf{S})(d\mathbf{S}).
$$
which shows that $\mathbf{W} = \u(\mathbf{Y})^{*}\u(\mathbf{Y}) = \mathbf{X}_{1}+\mathbf{X}_{2} \sim
\mathfrak{R}^{\beta, I}_{m}(a+b,\kappa+\tau, \mathbf{\Sigma})$ independently of $\mathbf{S}$ with a
$c$-beta-Riesz distribution type I. \fin

\begin{thm}\label{teo5}\label{teobb2}
Let $\mathbf{X}_{1}$ and $\mathbf{X}_{2}$ be independently distributed as Riesz distribution type I, such
that $\mathbf{X}_{1} \sim \mathfrak{R}^{\beta, I}_{m}(a,\kappa, \mathbf{\Sigma})$ and $\mathbf{X}_{2}
\sim \mathfrak{R}^{\beta, I}_{m}(b,\tau, \mathbf{\Sigma})$, Re$(a) > (m-1)\beta/2+k_{1}$ and Re$(b)>
(m-1)\beta/2+t_{1}$. Let
$$
  \mathbf{R} = \u(\mathbf{X}_{2})^{*-1}\mathbf{X}_{1}\u(\mathbf{X}_{2})^{-1},
$$
where $\u(\mathbf{X}_{2}) \in \mathfrak{T}_{U}^{\beta}(m)$ is such that $\mathbf{X}_{1} =
\u(\mathbf{X}_{2})^{*}\u(\mathbf{X}_{2})$. Then $\mathbf{S}$ has a $c$-beta-Riesz distribution type II.
\end{thm}
\textbf{Proof. } From Theorem \ref{teo2} we know that if $\mathbf{S}$ has a $c$-beta-Riesz distribution
type I then $\mathbf{R} = \u(\mathbf{I}_{m} - \mathbf{S})^{*-1} \mathbf{S} \u(\mathbf{I}_{m} -
\mathbf{S})^{-1}$ has a $c$-beta-Riesz distribution type II. In addition the theorem establish that if
$\mathbf{X}_{1}$ and $\mathbf{X}_{2}$ be independently distributed as Riesz distribution type I, such
that $\mathbf{X}_{1} \sim \mathfrak{R}^{\beta, I}_{m}(a,\kappa, \mathbf{\Sigma})$ and $\mathbf{X}_{2}
\sim \mathfrak{R}^{\beta, I}_{m}(b,\tau, \mathbf{\Sigma})$ then $\mathbf{R} =
\u(\mathbf{X}_{1})^{*-1}\mathbf{X}_{1} \u(\mathbf{X}_{2})^{-1}$. Thus, the desired result is follow if we
proof that
$$
  \mathbf{R} = \u(\mathbf{I}_{m} - \mathbf{S})^{*-1} \mathbf{S} \u(\mathbf{I}_{m} -
  \mathbf{S})^{-1} = \u(\mathbf{X}_{2})^{*-1}\mathbf{X}_{1}\u(\mathbf{X}_{2})^{-1}.
$$
With this aim in mind, let $\u(\mathbf{Y}) \in \mathfrak{T}_{U}^{\beta}(m)$, such that $\mathbf{X}_{1} +
\mathbf{X}_{2} = \u(\mathbf{Y})^{*}\u(\mathbf{Y})$, then $\mathbf{S} =
\u(\mathbf{Y})^{*-1}\mathbf{X}_{1}\u(\mathbf{Y})^{-1}$. Now, if $\mathbf{X}_{2} =
\u(\mathbf{X}_{2})^{*}\u(\mathbf{X}_{2})$, with $\u(\mathbf{X}_{2}) \in \mathfrak{T}_{U}^{\beta}(m)$. We
have
\begin{eqnarray*}
  \mathbf{I}_{m}-\mathbf{S} &=& \mathbf{I}_{m} - \u(\mathbf{Y})^{*-1}\mathbf{X}_{1}\u(\mathbf{Y})^{-1}\\
   &=& \u(\mathbf{Y})^{*-1}(\u(\mathbf{Y})^{*}\u(\mathbf{Y})- \mathbf{X}_{1})\u(\mathbf{Y})^{-1} \\
   &=& \u(\mathbf{Y})^{*-1}\mathbf{X}_{2}\u(\mathbf{Y})^{-1} \\
   &=& \u(\mathbf{Y})^{*-1}\u(\mathbf{X}_{2})^{*}\u(\mathbf{X}_{2})\u(\mathbf{Y})^{-1} \\
   &=& \left(\u(\mathbf{X}_{2})\u(\mathbf{Y})^{-1}\right)^{*}\left(\u(\mathbf{X}_{2})\u(\mathbf{Y})^{-1}\right)\\
   &=& \u(\mathbf{I}_{m} - \mathbf{S})^{*} \u(\mathbf{I}_{m} - \mathbf{S}).
\end{eqnarray*}
This least equally is obtained observing that $\left(\u(\mathbf{X}_{2}) \u(\mathbf{Y})^{-1}\right) \in
\mathfrak{T}_{U}^{\beta}(m)$, then $\left(\u(\mathbf{X}_{2}) \u(\mathbf{Y})^{-1}\right) =
\u(\mathbf{I}_{m} - \mathbf{S})$. Therefore
\begin{eqnarray*}
  \u(\mathbf{I}_{m} - \mathbf{S})^{*-1} \mathbf{S} \u(\mathbf{I}_{m} -
  \mathbf{S})^{-1} &=& \left(\u(\mathbf{X}_{2})\u(\mathbf{Y})^{-1}\right)^{*-1}
  \mathbf{S} \left(\u(\mathbf{X}_{2})\u(\mathbf{Y})^{-1}\right)^{-1} \\
   &=& \u(\mathbf{X}_{2})^{*-1}\u(\mathbf{Y})^{*} \mathbf{S} \u(\mathbf{Y})\u(\mathbf{X}_{2})^{-1} \\
   &=& \u(\mathbf{X}_{2})^{*-1}\mathbf{X}_{1} \u(\mathbf{X}_{2})^{-1}.
\end{eqnarray*}
From where the desired result is obtained. \fin

The following theorems \ref{teo6} and \ref{teo7} contain versions  for $k$-beta-Riesz distributions of
theorems \ref{teobb1} and \ref{teobb2}, whose proofs are similar.

\begin{thm}\label{teo6}
Let $\mathbf{X}_{1}$ and $\mathbf{X}_{2}$ be independently distributed as Riesz distribution type II,
such that $\mathbf{X}_{1} \sim \mathfrak{R}^{\beta, II}_{m}(a,\kappa, \mathbf{\Sigma})$ and
$\mathbf{X}_{2} \sim \mathfrak{R}^{\beta, II}_{m}(b,\tau, \mathbf{\Sigma})$, Re$(a) > (m-1)\beta/2+k_{1}$
and Re$(b)> (m-1)\beta/2+t_{1}$. Let
$$
  \mathbf{S} =\u(\mathbf{X}_{1}+\mathbf{X}_{2})^{*-1}\mathbf{X}_{1}\u(\mathbf{X}_{1}+
  \mathbf{X}_{2})^{-1},
$$
where $\u(\mathbf{X}_{1}+\mathbf{X}_{2}) \in \mathfrak{T}_{U}^{\beta}(m)$ is such that
$(\mathbf{X}_{1}+\mathbf{X}_{2})=
\u(\mathbf{X}_{1}+\mathbf{X}_{2})^{*}\u(\mathbf{X}_{1}+\mathbf{X}_{2})$. Then $\mathbf{S}$ has a
$k$-beta-Riesz distribution type I.
\end{thm}

\begin{thm}\label{teo7}
Let $\mathbf{X}_{1}$ and $\mathbf{X}_{2}$ be independently distributed as Riesz distribution type I, such
that $\mathbf{X}_{1} \sim \mathfrak{R}^{\beta, II}_{m}(a,\kappa, \mathbf{\Sigma})$ and $\mathbf{X}_{2}
\sim \mathfrak{R}^{\beta, II}_{m}(b,\tau, \mathbf{\Sigma})$, Re$(a) > (m-1)\beta/2+k_{1}$ and Re$(b)>
(m-1)\beta/2+t_{1}$. Let
$$
  \mathbf{R} = \u(\mathbf{X}_{2})^{*-1}\mathbf{X}_{1}\u(\mathbf{X}_{2})^{-1},
$$
where $\u(\mathbf{X}_{2})  \in \mathfrak{T}_{U}^{\beta}(m)$ is such that $\mathbf{X}_{2} =
\u(\mathbf{X}_{1})^{*}\u(\mathbf{X}_{1})$. Then $\mathbf{S}$ has a $k$-beta-Riesz distribution type II.
\end{thm}

\subsection{Some properties of the $c$-beta-Riesz and $k$-beta-Riesz distributions}

This section derives the distributions of eigenvalues for $c$-beta-Riesz and $k$-beta-Riesz distributions
type I and II. First consider the following integrals:
$$
  Q(\kappa,\tau, \mathbf{A},\mathbf{B}) = \int_{\mathfrak{U}^{\beta}(m)}q_{\kappa}(\mathbf{HAH}^{*})q_{\tau}
  \left(\mathbf{H}\mathbf{B}\mathbf{H}^{*}\right)
  (d\mathbf{H})
$$
and
$$
  Q_{1}(\kappa,\tau, \mathbf{A},\mathbf{B}) = \int_{\mathfrak{U}^{\beta}(m)}q_{\kappa}(\mathbf{HAH}^{*})q_{-(\kappa+\tau)}
  \left(\mathbf{H}\mathbf{B}\mathbf{H}^{*}\right)
  (d\mathbf{H})
$$

\begin{thm}\label{teo8}
Let $\mathbf{\Sigma} \in \mathbf{\Phi}_{m}^{\beta}$,  $\kappa = (k_{1}, k_{2}, \dots, k_{m})$, $k_{1}\geq
k_{2}\geq \cdots \geq k_{m} \geq 0$, $k_{1}, k_{2},\dots, k_{m}$ are nonnegative integers and $\tau =
(t_{1}, t_{2}, \dots, t_{m})$, $t_{1}\geq t_{2}\geq \cdots \geq t_{m} \geq 0$, $t_{1}, t_{2},\dots,
t_{m}$ are nonnegative integers.
\begin{enumerate}
  \item Let $\mathbf{L} = \diag(\lambda_{1}, \dots,\lambda_{m})$,  $\lambda_{1}> \cdots >\lambda_{m} > 0$ be
  the eigenvalues of $\mathbf{S}$. Then if $\mathbf{S}$ has a $c$-beta-Riesz distribution of type I,
  the joint density  of $\lambda_{1}, \dots, \lambda_{m}$ is
  $$
    \frac{\pi^{m^{2}\beta/2+ \varrho}}{\Gamma_{m}^{\beta}[m\beta/2]\mathcal{B}_{m}^{\beta}[a,\kappa;b, \tau]}
    \prod_{i< j}^{m}(\lambda_{i} -\lambda_{j})^{\beta} \prod_{i=1}^{m}\lambda_{i}^{a-(m-1)\beta/2-1}
    \prod_{i=1}^{m}(1 - \lambda_{i})^{b-(m-1)\beta/2-1}\hspace{3cm}
$$
$$
    \hspace{8cm}
    Q(\kappa,\tau, \mathbf{L},\mathbf{I}_{m}-\mathbf{L})
    \left(\bigwedge_{i=1}^{m}d\lambda_{i}\right),
  $$
  where $0 < \lambda_{i}<1$, $i = 1, \dots,m$ and $Re(a) > (m-1)\beta/2-k_{m}$ and Re$(b)> (m-1)\beta/2-t_{m}$.

  \item Let $\Delta = \diag\left(\delta_{1}, \dots,\delta_{m}\right)$,  $\delta_{1}> \cdots >\delta_{m} > 0$ be
  the eigenvalues of $\mathbf{R}$. Then if $\mathbf{R}$ has a $c$-beta-Riesz distribution of type II,
  the joint density of their eigenvalues is
  $$
    \frac{\pi^{m^{2}\beta/2+ \varrho}}{\Gamma_{m}^{\beta}[m\beta/2]\mathcal{B}_{m}^{\beta}[a,\kappa;b, \tau]}
    \prod_{i< j}^{m}(\delta_{i} -\delta_{j})^{\beta} \prod_{i=1}^{m}\delta_{i}^{a-(m-1)\beta/2-1}
    \prod_{i=1}^{m}(1 - \delta_{i})^{-(a+b)}
    \hspace{3cm}
  $$
  $$
    \hspace{8cm}
     Q_{1}\left(\kappa,\tau, \mathbf{\Delta},\left(\mathbf{I}_{m} + \mathbf{\Delta}\right)\right)
    \left(\bigwedge_{i=1}^{m}d\delta_{i}\right),
  $$
  where $\delta_{i} > 0$, $i = 1, \dots,m$ and Re$(a) > (m-1)\beta/2-k_{m}$ and Re$(b)>
  (m-1)\beta/2-t_{m}$.
\end{enumerate}
$\varrho$ is defined in Lemma \ref{eig}.
\end{thm}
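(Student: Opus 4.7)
\textbf{Proof plan for Theorem~\ref{teo8}.} The plan is to apply Lemma~\ref{eig} to each of the densities (\ref{dfcbR1}) and (\ref{dfcbR2}), and to observe that after the spectral substitution $\mathbf{S}=\mathbf{H}\mathbf{L}\mathbf{H}^{*}$ (respectively $\mathbf{R}=\mathbf{H}\mathbf{L}\mathbf{H}^{*}$), with $\mathbf{L}$ the diagonal matrix of eigenvalues and $\mathbf{H}\in\mathfrak{U}^{\beta}(m)$, the integrand depends only on $\mathbf{L}$; the Haar integral over $\mathbf{H}$ then collapses to $1$ and the result can be read off directly. The key tool is property~(\ref{qk1}), which expresses $q_{\kappa}(\mathbf{A})$ as $\prod_{i}\lambda_{i}^{k_{i}}$ in terms of the eigenvalues of $\mathbf{A}$; equivalently, by~(\ref{qk5}) applied to unitary $\mathbf{H}$ with $\mathbf{H}^{*}\mathbf{H}=\mathbf{I}_{m}$ and $q_{\kappa}(\mathbf{I}_{m})=1$, this amounts to the unitary invariance $q_{\kappa}(\mathbf{H}\mathbf{L}\mathbf{H}^{*})=q_{\kappa}(\mathbf{L})$.

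For part (1), writing $\mathbf{S}=\mathbf{H}\mathbf{L}\mathbf{H}^{*}$ with $\mathbf{L}=\diag(\lambda_{1},\dots,\lambda_{m})$ I would compute $|\mathbf{S}|=\prod_{i}\lambda_{i}$ and $|\mathbf{I}_{m}-\mathbf{S}|=\prod_{i}(1-\lambda_{i})$, and, using (\ref{qk1}) for $\mathbf{S}$ and for $\mathbf{I}_{m}-\mathbf{S}=\mathbf{H}(\mathbf{I}_{m}-\mathbf{L})\mathbf{H}^{*}$ (whose eigenvalues are the $1-\lambda_{i}$),
\[
q_{\kappa}(\mathbf{S})=\prod_{i=1}^{m}\lambda_{i}^{k_{i}},\qquad q_{\tau}(\mathbf{I}_{m}-\mathbf{S})=\prod_{i=1}^{m}(1-\lambda_{i})^{t_{i}}.
\]
Plugging these into (\ref{dfeig}) of Lemma~\ref{eig}, the $\mathbf{H}$-dependence vanishes, so the normalised Haar integral yields the integrand itself, and the Vandermonde-type prefactor $\pi^{m^{2}\beta/2+\varrho}\prod_{i<j}(\lambda_{i}-\lambda_{j})^{\beta}/\Gamma_{m}^{\beta}[m\beta/2]$ supplies the remaining pieces of the claimed joint density.

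Part (2) is parallel. Setting $\mathbf{R}=\mathbf{H}\mathbf{L}\mathbf{H}^{*}$ with $\mathbf{L}=\diag(\delta_{1},\dots,\delta_{m})$, and combining (\ref{qk1})--(\ref{qk42}) with $\mathbf{I}_{m}+\mathbf{R}=\mathbf{H}(\mathbf{I}_{m}+\mathbf{L})\mathbf{H}^{*}$, one obtains
\[
q_{\kappa}(\mathbf{R})=\prod_{i=1}^{m}\delta_{i}^{k_{i}},\qquad |\mathbf{I}_{m}+\mathbf{R}|^{-(a+b)}\,q^{-1}_{\kappa+\tau}(\mathbf{I}_{m}+\mathbf{R})=\prod_{i=1}^{m}(1+\delta_{i})^{-(a+b+k_{i}+t_{i})},
\]
so that the integrand in (\ref{dfcbR2}) at $\mathbf{H}\mathbf{L}\mathbf{H}^{*}$ is again $\mathbf{H}$-free, and Lemma~\ref{eig} finishes the argument.

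The only non-cosmetic point is the unitary invariance $q_{\kappa}(\mathbf{H}\mathbf{L}\mathbf{H}^{*})=q_{\kappa}(\mathbf{L})$ and its analogue for $q_{\tau}$ and $q_{\kappa+\tau}$: these are not immediate from the principal-minor definition~(\ref{hwv}), but are exactly what~(\ref{qk1}) and~(\ref{qk5}) provide. Once this invariance is in hand, the remainder of the proof is purely mechanical bookkeeping of exponents, with no analytic subtleties beyond the hypotheses that guarantee convergence of the defining integrals.
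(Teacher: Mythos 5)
Your proposal is correct and follows essentially the same route as the paper: the paper's proof is precisely ``apply Lemma \ref{eig} to (\ref{dfcbR1}) and (\ref{dfcbR2}), taking into account Remark \ref{rem1} and equations (\ref{qk1}) and (\ref{qk2})'', and your write-up is an expanded version of that one-liner, with the Haar integral collapsing because the integrand depends only on the eigenvalues. The only cosmetic difference is that you invoke (\ref{qk5}) to justify the unitary invariance of $q_{\kappa}$ where the paper cites (\ref{qk1}) and Remark \ref{rem1}; both arguments hinge on the same asserted property that $q_{\kappa}$ is a function of the eigenvalues alone, which you rightly identify as the only non-mechanical step.
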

\textbf{Proof. } This is due to applying the Lemma \ref{eig} in (\ref{dfcbR1}) and (\ref{dfcbR2}). \fin

This section conclude establishing the Theorem \ref{teo8} for the case of the $k$-beta-Riesz
distributions.

\begin{thm}\label{teo9}
Let $\mathbf{\Sigma} \in \mathbf{\Phi}_{m}^{\beta}$,  $\kappa = (k_{1}, k_{2}, \dots, k_{m})$, $k_{1}\geq
k_{2}\geq \cdots \geq k_{m} \geq 0$, $k_{1}, k_{2},\dots, k_{m}$ are nonnegative integers and $\tau =
(t_{1}, t_{2}, \dots, t_{m})$, $t_{1}\geq t_{2}\geq \cdots \geq t_{m} \geq 0$, $t_{1}, t_{2},\dots,
t_{m}$ are nonnegative integers.
\begin{enumerate}
  \item Let $\mathbf{L} = \diag\left(\lambda_{1}, \dots,\lambda_{m}\right)$,  $\lambda_{1}> \cdots >\lambda_{m} > 0$ be
  the eigenvalues of $\mathbf{S}$. Then if $\mathbf{S}$ has a $k$-beta-Riesz distribution of type I,
  the joint density  of $\lambda_{1}, \dots, \lambda_{m}$ is
  $$
    \frac{\pi^{m^{2}\beta/2+ \varrho}}{\Gamma_{m}^{\beta}[m\beta/2]\mathcal{B}_{m}^{\beta}[a,-\kappa;b, -\tau]}
    \prod_{i< j}^{m}(\lambda_{i} -\lambda_{j})^{\beta} \prod_{i=1}^{m}\lambda_{i}^{a-(m-1)\beta/2-1}
    \prod_{i=1}^{m}(1 - \lambda_{i})^{b-(m-1)\beta/2-1}
    \hspace{3cm}
  $$
  $$
    \hspace{7cm}
    Q\left(\kappa,\tau, \mathbf{L}^{-1},(\mathbf{I}_{m}-\mathbf{L})^{-1}\right)
    \left(\bigwedge_{i=1}^{m}d\lambda_{i}\right),
  $$
  where $0 < \lambda_{i}<1$, $i = 1, \dots,m$ and Re$(a) > (m-1)\beta/2+k_{1}$ and Re$(b)> (m-1)\beta/2+t_{1}$.

  \item Let $\Delta = \diag\left(\delta_{1}, \dots,\delta_{m}\right)$,  $\delta_{1}> \cdots >\delta_{m} > 0$ be
  the eigenvalues of $\mathbf{R}$. Then if $\mathbf{R}$ has a $k$-beta-Riesz distribution of type II,
  the joint density of their eigenvalues is
  $$
    \frac{\pi^{m^{2}\beta/2+ \varrho}}{\Gamma_{m}^{\beta}[m\beta/2]\mathcal{B}_{m}^{\beta}[a,-\kappa;b, -\tau]}
    \prod_{i< j}^{m}(\delta_{i} -\delta_{j})^{\beta} \prod_{i=1}^{m}\delta_{i}^{a-(m-1)\beta/2-1}
    \prod_{i=1}^{m}(1 - \delta_{i})^{-(a+b)}
    \hspace{3cm}
  $$
  $$
    \hspace{6cm}
     Q_{1}\left(\kappa,\tau, \mathbf{\Delta}^{-1},\left(\mathbf{I}_{m} + \mathbf{\Delta}\right)^{-1}\right)
    \left(\bigwedge_{i=1}^{m}d\delta_{i}\right),
  $$
  where $\delta_{i} > 0$, $i = 1, \dots,m$ and Re$(a) > (m-1)\beta/2+k_{1}$ and Re$(b)> (m-1)\beta/2+t_{1}$.
\end{enumerate}
$\varrho$ is defined in Lemma \ref{eig}.
\end{thm}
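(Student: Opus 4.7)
The plan is to follow the argument used for Theorem \ref{teo8}, substituting the densities (\ref{dfkbR1}) and (\ref{dfkbR2}) of the $k$-beta-Riesz distributions in place of (\ref{dfcbR1}) and (\ref{dfcbR2}). By Lemma \ref{eig} the eigenvalue density is obtained by evaluating the matrix density at $\mathbf{HLH}^*$ with $\mathbf{L} = \diag(\lambda_1,\dots,\lambda_m)$ (or $\diag(\delta_1,\dots,\delta_m)$ for part 2), multiplying by the Vandermonde factor $\prod_{i<j}(\lambda_i-\lambda_j)^\beta$ together with the constant $\pi^{m^2\beta/2+\varrho}/\Gamma_m^\beta[m\beta/2]$, and integrating over $\mathbf{H}\in\mathfrak{U}^\beta(m)$ with respect to the normalised Haar measure.

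For part 1, I would apply property (\ref{qk5}) with $\mathbf{B}=\mathbf{H}$, noting that $\mathbf{C}=\mathbf{H}^*\mathbf{H}=\mathbf{I}_m$ and therefore $q_\kappa(\mathbf{C})=1$; combined with (\ref{qk2}) and (\ref{qk1}) this reduces $q_\kappa^{-1}(\mathbf{S})$ to $\prod \lambda_i^{-k_i}$. Applying the same reasoning to $\mathbf{I}_m-\mathbf{S}=\mathbf{H}(\mathbf{I}_m-\mathbf{L})\mathbf{H}^*$ yields $q_\tau^{-1}(\mathbf{I}_m-\mathbf{S})=\prod(1-\lambda_i)^{-t_i}$, while the determinants collapse as $|\mathbf{S}|=\prod\lambda_i$ and $|\mathbf{I}_m-\mathbf{S}|=\prod(1-\lambda_i)$. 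Remark \ref{rem1} ensures that nothing in these reductions depends on the unitary factor $\mathbf{H}$.

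For part 2 the same substitution with $\mathbf{R}=\mathbf{HLH}^*$ is inserted into (\ref{dfkbR2}); here I would use (\ref{qk41}) to split $q_{\kappa+\tau}(\mathbf{I}_m+\mathbf{R})=q_\kappa(\mathbf{I}_m+\mathbf{R})\,q_\tau(\mathbf{I}_m+\mathbf{R})$ and then (\ref{qk1}) to evaluate each factor. In both parts the integrand becomes independent of $\mathbf{H}$, so the integration over $\mathfrak{U}^\beta(m)$ contributes a factor of $1$ under the normalised Haar measure, leaving precisely the product form asserted in the statement.

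There is no real obstacle: the argument is entirely parallel to that of Theorem \ref{teo8}, the unitary invariance of $q_\kappa$ on the orbit of $\mathbf{L}$ being the only conceptual ingredient, and that is immediate from (\ref{qk5}). The remaining work is the bookkeeping required to accumulate the exponents of each $\lambda_i$ (respectively $\delta_i$) from the $|\mathbf{S}|^{a-(m-1)\beta/2-1}$, $q_\kappa^{-1}$, and $|\mathbf{I}_m-\mathbf{S}|^{b-(m-1)\beta/2-1}$ factors (and their type II analogues) into the claimed products $\prod \lambda_i^{a-k_i-(m-1)\beta/2-1}$ and $\prod(1-\lambda_i)^{b-t_i-(m-1)\beta/2-1}$.
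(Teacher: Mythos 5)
Your proposal is correct and matches the paper's route exactly: the paper gives no separate argument for this theorem, merely noting it is the $k$-beta analogue of Theorem \ref{teo8}, whose proof consists precisely of applying Lemma \ref{eig} to the densities together with Remark \ref{rem1} and the properties (\ref{qk1}), (\ref{qk2}) (and implicitly (\ref{qk5}), (\ref{qk41})) of $q_{\kappa}$, the $\mathbf{H}$-integral contributing a factor of one. One small caveat: in part 2 your computation (correctly) yields $\prod_{i}(1+\delta_{i})^{-(a+b-k_{i}-t_{i})}$, so it does not match the printed statement ``precisely'' --- the factor $(1-\delta_{i})$ there is a typo for $(1+\delta_{i})$, since $\mathbf{R}\in\mathfrak{P}_{m}^{\beta}$ and the $\delta_{i}$ are not bounded by $1$.
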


Finally observe that if in all result of this section are taking  $\kappa = (0,0, \dots,0) \in \Re^{m}$
and $\tau = (0,0, \dots,0) \in \Re^{m}$ the obtained results are the corresponding to matrix multivariate
beta distributions of type I and II.

\section*{Conclusions}

Finally, note that the real dimension of real normed division algebras can be expressed as powers of 2,
$\beta = 2^{n}$ for $n = 0,1,2,3$. On the other hand, as observed from \citet{k:84}, the results obtained
in this work can be extended to hypercomplex cases; that is, for complex, bicomplex, biquaternion and
bioctonion (or sedenionic) algebras, which of course are not division algebras (except the complex
algebra). Also note, that hypercomplex algebras are obtained by replacing the real numbers with complex
numbers in the construction of real normed division algebras. Thus, the results for hypercomplex algebras
are obtained by simply replacing $\beta$ with $2\beta$ in our results. Alternatively, following
\citet{k:84}, it can be concluded that, results are true for `$2^{n}$-ions', $n = 0,1,2,3,4,5$,
emphasising that only for $n=0,1,2,3$ are the result algebras, in fact, real normed division algebras.

\section*{Acknowledgements}
The author wish to thank the Editor and the anonymous reviewers for their constructive comments on the
preliminary version of this paper. This article was written under the existing research agreement between
the first author and the Universidad Aut\'onoma Agraria Antonio Narro, Saltillo, M\'exico.

\end{document}